\theoremstyle{plain}
\newtheorem{thm}{Theorem}[subsection]
\newtheorem*{thm*}{Theorem}
\newtheorem*{lem*}{Lemma}
\newtheorem{cor}[thm]{Corollary}
\newtheorem{prop}[thm]{Proposition}
\newtheorem{lem}[thm]{Lemma}
\theoremstyle{definition}
\newtheorem{definition}[thm]{Definition}
\newtheorem{rmk}[thm]{Remark}
\newcommand{\atd}{ancestor-to-descendant\space}
\newcommand{\ATD}{\mathrm{ATD}}
\newcommand{\btr}{\mathbb{T}}
\newcommand{\CLLD}{coarse co-Lipschitz}
\newcommand{\ctr}{\mathcal{T}}
\newcommand{\eps}{\varepsilon}
\newcommand{\ffrac}{\displaystyle \frac}
\newcommand{\lev}{\mathrm{level}}
\newcommand{\Lip}{\mathrm{Lip}}
\newcommand{\LLD}{coarse Lipschitz\space}
\newcommand{\mks}{\mathbb{M}_{\infty}}
\newcommand{\mS}{\mathcal{S}}
\newcommand{\mtr}{\mathbb{T}_{\infty}}
\newcommand{\ov}{\overline}
\newcommand{\rt}{\mathrm{root}}
\newcommand{\sep}{\mathrm{sep}}
\newcommand{\ttilde}{\widetilde}
\newcommand{\tto}{\longrightarrow}
\begin{document}

\title{The transfer of property $(\beta)$ of Rolewicz by a uniform quotient map}

\author{S.~J.~ Dilworth}
\author{Denka Kutzarova}
\author{N. Lovasoa Randrianarivony}

\address{Department of Mathematics\\
University of South Carolina\\
Columbia, SC 29208, USA.}
\email{dilworth@math.sc.edu}

\address{Department of Mathematics\\
University of Illinois at Urbana-Champaign\\
Urbana, IL 61801, USA.}
\email{denka@math.uiuc.edu}

\address{Department of Mathematics and Computer Science\\
Saint Louis University \\
St. Louis, MO  63103, USA.}
\email{nrandria@slu.edu}

\keywords{Property $(\beta)$ of Rolewicz, uniform quotient, Lipschtiz quotient, Laakso construction}
\subjclass[2010]{46B80, 46B20, 46T99, 51F99}

\thanks{The first author was partially supported by NSF grant DMS--1101490.  The third author was partially supported by NSF grant DMS--1301591.}

\maketitle

\begin{abstract}
We provide a Laakso construction to prove that the property of having an equivalent norm with the property $(\beta)$ of Rolewicz is qualitatively preserved via surjective uniform quotient mappings between separable Banach spaces. On the other hand, we show that the $(\beta)$-modulus is not quantitatively preserved via such a map by exhibiting two uniformly homeomorphic Banach spaces that do not have $(\beta)$-moduli of the same power-type even under renorming.
\end{abstract}

\section{Introduction}
A map $f:X\tto Y$ between two metric spaces is called a uniform quotient mapping \cite[Chapter 11]{BenyaminiLindenstrauss2000} if there exist two nondecreasing maps $\omega,\Omega:[0,\infty)\tto[0,\infty)$ such that $\omega(r)>0$ for $r>0$, and $\Omega(r)\to0$ as $r\to 0$, and for every $x\in X$ and every $r>0$,
$$B(f(x),\omega(r))\subseteq f(B(x,r))\subseteq B(f(x),\Omega(r)).$$
(In this article, all balls will be considered closed unless otherwise specified.)  If there are constants $c,L>0$ such that one can use $\omega(r)=cr$ and $\Omega(r)=Lr$, then the map $f$ is called a Lipschitz quotient mapping.  If the map $f$ is surjective, the metric space $Y$ is called a uniform quotient (resp. Lipschitz quotient) of the metric space $X$.

\medskip

Closely related to the above definitions are the notions of coarse Lipschitz maps and coarse co-Lipschitz maps.  A map $f:X\to Y$ between two metric spaces is called Lipschitz for large distances, or coarse Lipschitz, if for every ${\delta}>0$ there exists $L({\delta})>0$ such that $d_Y(f(x),f(x'))\leq L({\delta})\, d_X(x,x')$ for every $x,x'\in X$ with $d_X(x,x')\geq {\delta}$.  The map $f$ is called co-Lipschitz for large distances, or coarse co-Lipschitz, if for every ${\delta}>0$ there exists $c({\delta})>0$ such that for every $R\geq {\delta}$, for every $y,y'\in Y$ with $d_Y(y,y')\leq c({\delta})R$, for every $x\in f^{-1}(y)$, there exists $x'\in f^{-1}(y')$ with $d_X(x,x')\leq R$.  
It is known that when the metric spaces $X$ and $Y$ are metrically convex (for example if $X$ and $Y$ are Banach spaces), then a surjective uniform quotient mapping between $X$ and $Y$ is always coarse Lispchitz and coarse co-Lipschitz (see \cite[Chapter 11]{BenyaminiLindenstrauss2000} and \cite[Section 3]{LimaRandrianarivony2010} for example).

\medskip

In \cite{LimaRandrianarivony2010} and in \cite{DilworthKutzarovaLancienRandrianarivony2012} the geometric property $(\beta)$ of Rolewicz is applied to the study of the rigidity of Banach spaces under uniform quotient mappings.  Rolewicz introduced property $(\beta)$ as an isometric generalization of uniform convexity in the sense that a uniformly convex Banach space necessarily has property $(\beta)$ (see \cite{Rolewicz1986} and \cite{Rolewicz1987}).  Furthermore a result of Baudier, Kalton, and Lancien \cite{BaudierKaltonLancien2010} implies that a separable reflexive Banach space $X$ can be renormed to have property $(\beta)$ if and only if the infinitely branching infinitely deep tree with its hyperbolic metric cannot bi-Lipschitz embed into $X$.  Contrasted with Bourgain's metric characterization of superreflexivity (see \cite{Bourgain1986} and \cite{Baudier2007}), this indicates that for separable spaces the isomorphic class of all Banach spaces with property $(\beta)$ is also larger than the isomorphic class of all uniformly convex Banach spaces.  We remark in passing that this statement is actually known to be true, see for example \cite{Kutzarova1989}.

\medskip

For the purpose of this article, the equivalent definition of property $(\beta)$ given in \cite{Kutzarova1991} is used.  A Banach space $X$ has property $(\beta)$ if for any $t\in (0,a]$, where the number $1\leq a\leq 2$ depends on the space X,  there exists $\delta>0$ such that for any element $x$ and sequence $(x_n)_{n\geq 1}$ in the unit ball $B_X$, if $\sep\left((x_n)_{n\geq 1}\right)=\inf\{\|x_n-x_m\|: n\neq m; n,m\geq 1\}\geq t$, then there exists $n\ge 1$ satisfying
$$\frac{\|x-x_n\|}{2}\le 1-\delta.$$
The $(\beta)$-modulus was defined in \cite{AyerbeDominguez_BenavidesCutillas1994} (although with a different notation) as
$$
\ov{\beta}_X(t) = 1 - \sup \left\{ \inf \left\{
\frac{\|x-x_n\|}{2} : n\geq 1 \right\} : (x_n)_{n\geq1} \subseteq B_X, x\in
B_X, \sep\left( (x_n)_{n\geq 1} \right) \ge t \right\}.
$$

\medskip

In \cite{DilworthKutzarovaLancienRandrianarivony2012} it is proven that if a Banach space $Y$ is a uniform quotient of another Banach space $X$ with property $(\beta)$, then one can compare the $(\beta)$-modulus of $X$ with the modulus of another asymptotic geometry on $Y$, namely asymptotic uniform smoothness, and under additional assumptions, asymptotic uniform convexity.  (See \cite{Milman1971} and \cite{JohnsonLindenstraussPreissSchechtman2002} for the notion of asymptotic uniform smoothness and asymptotic uniform convexity.  See also Section \ref{Denka's notes}.)  On the other hand, the authors in \cite{MendelNaor2010} use the (local) geometric property called Markov convexity to study the rigidity of Banach spaces under Lipschitz quotient mappings.  They do this by comparing the Markov convexity of a space $Y$ with the Markov convexity of a space $X$ when $Y$ is a Lipschitz quotient of a subset of $X$.  

\medskip

In light of these two approaches, the present article studies the asymptotic property $(\beta)$ of Rolewicz on a Banach space $Y$ that is a uniform quotient of another Banach space $X$ with property $(\beta)$.  This is done both in a qualitative sense and a quantitative sense.  We show that the property of having an equivalent norm with property $(\beta)$ is qualitatively preserved under a surjective uniform quotient mapping between separable Banach spaces.  We do this by first assuming reflexivity on the range space in Section \ref{first note}.  In Section \ref{second note}, we provide a modification of this approach to remove the reflexivity assumption.

\medskip

In contrast, we show in Section \ref{Denka's notes} that quantitatively the $(\beta)$-modulus is not preserved under a surjective uniform quotient mapping.  We also show that in full generality we cannot compare the $(\beta)$-modulus of the domain space with the modulus of asymptotic uniform convexity of the range space of a surjective uniform quotient map.

\medskip

\section{Case when the range space is reflexive}\label{first note}

One of the main ingredients in this section is a metric characterization of being isomorphic to a space with property $(\beta)$.  By combining the results of \cite[Theorem 6.3]{DilworthKutzarovaLancienRandrianarivony2012} and \cite[Theorem 1.2]{BaudierKaltonLancien2010}, we know that a separable reflexive Banach space $Y$ has an equivalent norm with property $(\beta)$ if and only if it does not contain any bi-Lipschitz copy of the metric tree $\mathbb{T}_{\infty}$ of all finite subsets of $\mathbb{N}$ with the shortest path metric.

\begin{thm}\label{thm1}
Let $X$ be a Banach space with (an equivalent norm with) property $(\beta)$.  Let $Y$ be a separable Banach space that is a uniform quotient of $X$.  If $Y$ is reflexive, then $Y$ has an equivalent norm with property $(\beta)$.
\end{thm}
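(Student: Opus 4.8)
The plan is to prove the contrapositive, using the metric characterization recalled just above to reduce everything to one lifting step. Suppose $Y$ has no equivalent norm with property $(\beta)$. Passing to an equivalent renorming of $X$ — which does not affect $f$ being a surjective uniform quotient map, only its gauges — I may assume that the norm of $X$ itself has property $(\beta)$, so in particular $X$ is reflexive (property $(\beta)$ implies reflexivity). Since $Y$ is separable, reflexive and not $(\beta)$-renormable, the characterization yields a bi-Lipschitz embedding $g\colon\mtr\tto Y$, which after rescaling by a constant I may take to satisfy $\frac1A d(s,t)\le\|g(s)-g(t)\|\le d(s,t)$ for all $s,t$, where $d$ is the graph metric on $\mtr$. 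The goal is then to manufacture a bi-Lipschitz copy of $\mtr$ \emph{inside $X$}; this will give the contradiction, since such a copy spans a separable reflexive subspace $X_0$ of $X$ (separable, as the closed span of a countable set; reflexive, as a subspace of a reflexive space) which inherits from $X$ an equivalent norm with property $(\beta)$, so the characterization applied to $X_0$ forbids exactly such a copy.

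To produce the copy I would use that $f$, being a surjective uniform quotient between Banach spaces, is coarse Lipschitz (gauge $L(\cdot)$), coarse co-Lipschitz (gauge $c(\cdot)$), and uniformly continuous, $d_Y(f(x),f(x'))\le\Omega(d_X(x,x'))$ with $\Omega(r)\to 0$ as $r\to 0$. Fix $\delta>0$ and set $c:=c(\delta)$. The key move is to \emph{dilate first}: replace $g$ by $\lambda g$ with $\lambda>0$ so large that $R:=\lambda/c\ge\delta$. Then build $\psi\colon\mtr\tto X$ by induction on the levels of $\mtr$, keeping $f\circ\psi=\lambda g$ and $d_X(\psi(t),\psi(t'))\le R$ whenever $t'$ is a child of $t$: the root lifts since $f$ is onto; and if $\psi(t)$ is already chosen with $f(\psi(t))=\lambda g(t)$ and $t'$ is a child of $t$, then $d_Y(f(\psi(t)),\lambda g(t'))=\lambda\|g(t)-g(t')\|\le\lambda=cR$ while $R\ge\delta$, so the coarse co-Lipschitz property produces $\psi(t')\in f^{-1}(\lambda g(t'))$ within distance $R$ of $\psi(t)$. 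Injectivity of $\psi$ is automatic because $f\circ\psi=\lambda g$ is injective, and concatenating the $R$-bounded jumps along geodesics of $\mtr$ gives the upper estimate $d_X(\psi(s),\psi(t))\le R\,d(s,t)$.

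The step I expect to be the real obstacle is the lower estimate, since a uniform quotient is only ``Lipschitz-like'' over bounded ranges of scales and one cannot just pull $\|\lambda g(s)-\lambda g(t)\|\ge\lambda d(s,t)/A$ back through $f$. The remedy: from $\|\lambda g(s)-\lambda g(t)\|=d_Y(f(\psi(s)),f(\psi(t)))\le\Omega(d_X(\psi(s),\psi(t)))$ and $\|\lambda g(s)-\lambda g(t)\|\ge\lambda/A>0$ for $s\neq t$, choosing once and for all a scale $\delta_1>0$ with $\Omega(\delta_1)<\lambda/A$ forces $d_X(\psi(s),\psi(t))\ge\delta_1$ for all $s\neq t$; the coarse Lipschitz estimate at scale $\delta_1$ then applies and gives $d_X(\psi(s),\psi(t))\ge\frac{\lambda}{A L(\delta_1)}d(s,t)$. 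So $\psi$ is a bi-Lipschitz embedding of $\mtr$ into $X$, the desired contradiction. Dilating $\mtr$ at the outset is what makes the scale-dependence of $L(\cdot)$ and $c(\cdot)$ harmless, since the whole lift happens at the single scale $R$; the possible non-separability of $X$ is handled only at the very end, by passing to the closed span of the lifted tree. (Removing the reflexivity of $Y$, as in Section~\ref{second note}, requires a genuinely different device, because the metric characterization used here is available only for reflexive spaces.)
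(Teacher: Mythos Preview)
Your argument is correct and takes a genuinely different route from the paper's. Both proofs start the same way: use the backward direction of the characterization to plant a bi-Lipschitz copy of $\mtr$ inside $Y$. From there the paper observes that $\mtr$ becomes a Lipschitz quotient of the subset $f^{-1}(\Sigma)\subseteq X$ and then proves, via the Laakso-type space $\mks$, the map $\phi\colon\mtr\to\mks$, and the fork argument of Proposition~\ref{forkarg}, that this is impossible when $X$ has property~$(\beta)$; in particular the paper derives the contradiction directly against the $(\beta)$-modulus without ever invoking the forward direction of the characterization. You instead \emph{lift the tree itself}: the coarse co-Lipschitz property lets you build $\psi\colon\mtr\to X$ level by level with $f\circ\psi=\lambda g$ and edge-jumps bounded by a fixed $R$, and then uniform continuity plus coarse Lipschitzness give the lower bound, so $\psi$ is bi-Lipschitz; passing to the separable reflexive span and applying the \emph{forward} direction of the characterization finishes.

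What each buys: your proof is shorter and more elementary, exploiting that geodesics in $\mtr$ are concatenations of edges (so the edgewise lift is automatically Lipschitz) and leaning on the already-known forward implication of the $\mtr$-characterization. The paper's route is heavier here, but its machinery is exactly what is recycled in Section~\ref{second note}: once reflexivity of $Y$ is dropped, there is no bi-Lipschitz copy of $\mtr$ in $Y$ to lift, only James' quasi-tree $\ctr$, and the Laakso/fork argument is what survives that loss. So your approach is a clean shortcut for Theorem~\ref{thm1}, while the paper's approach is the one that scales to Theorem~\ref{thm2}.
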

	\begin{proof}
	Let $f:X\tto Y$ be a surjective uniform quotient mapping.  Then $f$ is coarse Lipschitz and coarse co-Lipschitz.  For a contradiction, assume that $Y$ does not admit any equivalent norm with property $(\beta)$.  Denote by $\Sigma$ a subset of $Y$ that is bi-Lipschitz equivalent to $\mathbb{T}_{\infty}$, with a bi-Lipschitz equivalence denoted by $j:\Sigma\tto \mathbb{T}_{\infty}$.  The restriction $f_{|_{f^{-1}(\Sigma)}}:f^{-1}(\Sigma)\tto \Sigma$ is still a uniform quotient map that is \LLD and \CLLD.  As a result, since $\mtr$ is discrete, the composition $j\circ f_{|_{f^{-1}(\Sigma)}}:f^{-1}(\Sigma)\tto \mathbb{T}_{\infty}$ is a (surjective) Lipschitz quotient map.  The theorem then follows from the next proposition. 
	
	\end{proof}

\begin{prop}\label{mainprop}
  It is not possible that $\mathbb{T}_{\infty}$ be a Lipschitz quotient of a subset of a Banach space with property $(\beta)$.
\end{prop}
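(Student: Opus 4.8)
The plan is to argue by contradiction, running in the Lipschitz‑quotient setting a strengthened form of the argument that $\mtr$ does not bi‑Lipschitz embed into a space with property $(\beta)$. So suppose $g:A\tto\mtr$ is a Lipschitz quotient of a subset $A$ of a Banach space $X$ with property $(\beta)$, with $d_{\mtr}(g(x),g(x'))\le L\|x-x'\|$ for all $x,x'\in A$ and $B(g(x),cr)\cap\mtr\subseteq g(B(x,r)\cap A)$ for all $x\in A$ and $r>0$. Property $(\beta)$ implies reflexivity, so $X$ is reflexive. Put $t:=\min\{2c/L,\,a\}$, with $a$ the constant from the definition of $(\beta)$, and $\delta:=\ov{\beta}_X(t)>0$.

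The first ingredient is the form of property $(\beta)$ that survives passing to weak limits: \emph{if $(x_n)\subseteq B_X$ has $\sep\big((x_n)\big)\ge t$ and $x_n\rightharpoonup\xi$ weakly, then $\|\xi\|\le 1-2\delta<1$.} Indeed, applying the defining inequality of $\ov{\beta}_X(t)$ to every tail of $(x_n)$ shows that for each fixed $x\in B_X$ infinitely many $n$ satisfy $\|x-x_n\|\le 2(1-\delta)$; by weak lower semicontinuity of the norm this gives $\|x-\xi\|\le 2(1-\delta)$ for \emph{every} $x\in B_X$, and testing against norming functionals yields $\|\xi\|\le 1-2\delta$. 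Next I would construct a ``lifted branch'' of $\mtr$ inside $X$ by a greedy descent. Pick a lift $p_0\in A$ of the root. Given a lift $p_k\in A$ of a vertex $v_k$ at depth $k$, use the co‑Lipschitz condition to lift the infinitely many children of $v_k$ (each at tree‑distance $1$ from $v_k$, and pairwise at tree‑distance $2$) to points $q_n\in A$ with $\|q_n-p_k\|\le 1/c$; the Lipschitz condition forces $\|q_n-q_m\|\ge 2/L$, so $\big(c(q_n-p_k)\big)_n$ is $t$‑separated in $B_X$. Passing to a weakly convergent subsequence and invoking the italicized fact, the weak limit $\xi_k$ of the vectors $q_n-p_k$ satisfies $\|\xi_k\|\le(1-2\delta)/c$; now choose the next vertex $v_{k+1}$ far enough out along the subsequence that $p_{k+1}:=q_n$ obeys $p_{k+1}-p_k=\xi_k+w_k$ with $w_k$ as weakly small as one wishes against a prescribed finite set of functionals. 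Since $\mtr$ is infinitely deep, this descent never stops.

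Finally one would try to close by comparing the growth of $\|p_N-p_0\|$ with $N$: the Lipschitz lower bound gives $\|p_N-p_0\|\ge \tfrac1L\,d_{\mtr}(v_N,v_0)=N/L$, while $p_N-p_0=\sum_{k=0}^{N-1}\xi_k+\sum_{k=0}^{N-1}w_k$, whose first sum has norm at most $N(1-2\delta)/c$ and whose second sum is a ``weakly null block'', hence $o(N)$ by the asymptotic structure of $X$ (another consequence of property $(\beta)$). The trouble, and the real crux, is that this only produces the inequality $1-2\delta\ge c/L$ rather than an impossibility: one application of property $(\beta)$ saves merely the factor $1-\delta$ off the diameter of the ball at a single level, which need not overcome the quotient constants. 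The genuine work is therefore to organize the infinitely many levels so that the small per‑level gains compound past the threshold set by $L$ and $c$ — descending in suitably chosen block‑heights, picking the centres against which $(\beta)$ is applied so that consecutive estimates reinforce one another rather than merely adding, and passing to a sub‑branch (or subtree, via a Ramsey‑type stabilization) along which the whole configuration converges weakly — all while keeping the error vectors $w_k$ negligible. It is precisely here that the infinite depth of $\mtr$ and the strength of property $(\beta)$ are used in an essential way.
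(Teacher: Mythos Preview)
Your proposal is not a proof: you yourself acknowledge that the argument as written only yields $1-2\delta\ge c/L$, which is no contradiction, and your final paragraph is an honest description of the difficulty rather than a resolution of it. The suggestion that the per-level savings can be made to ``compound'' is not substantiated, and there is reason to doubt it can be carried out along these lines. The increments $\xi_k$ in your decomposition are controlled only in norm, and the triangle inequality on $\sum\xi_k$ is essentially sharp unless you impose further structure on how the $\xi_k$ interact---structure that property $(\beta)$ alone does not supply. Ramsey stabilization of the branch may tighten the weak convergence, but the $\xi_k$ are \emph{already} the weak limits, so there is nothing more to extract on that side; and the claim that $\sum w_k$ is $o(N)$ ``by the asymptotic structure of $X$'' is not justified either, since the $w_k$ are only made small against finitely many functionals, not genuinely weakly null with any summability.

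The paper's approach sidesteps the compounding problem entirely. Rather than accumulating small gains over many levels of $\mtr$, one first composes with a $1$-Lipschitz, ancestor-to-descendant $1$-co-Lipschitz map $\phi$ from $\mtr$ onto a Laakso-type graph $\mks$, built so that at every scale $3^N$ there sits a branching vertex whose countably many children recombine at a common grandchild. In $\mks$ the siblings at a branching point of scale $r$ are at mutual distance $2r$, not $2$, so the separation of their lifts scales with $r$. One then runs a \emph{fork argument}: examining the asymptotically optimal ancestor-to-descendant co-Lipschitz constant $c_\infty^{\ATD}$ of the composed map $\lambda$ and choosing a scale $r$ at which this constant is almost attained forces the lifted fork $\sigma_0,\sigma_1,(\sigma_{2,k})_k$ in $X$ to be metrically rigid up to $\eps$---the legs have length $\approx r/c_\infty^{\ATD}$, the tines satisfy $\|\sigma_0-\sigma_{2,k}\|\approx 2r/c_\infty^{\ATD}$, and the $\sigma_{2,k}$ are $\approx 2r/\Lip(\lambda)$-separated. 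This directly gives $\ov\beta_X\bigl(c_\infty^{\ATD}/\Lip(\lambda)\bigr)=0$, a contradiction, with no iteration required. The Laakso construction is precisely the device that converts the single-scale saving you found insufficient into a one-shot contradiction.
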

	\begin{proof}
	We define a graph $\mathbb{M}_{\infty}$ considered as a metric space, and we define a map $\phi:\mathbb{T}_{\infty} \tto \mathbb{M}_{\infty}$.  This map $\phi$, although not a Lipschitz quotient map, will have a property still strong enough to draw a contradiction from the composition $\lambda=\phi\circ j\circ f_{|_{f^{-1}(\Sigma)}}:f^{-1}(\Sigma) \tto \mathbb{M}_{\infty}$, where $j\circ f_{|_{f^{-1}(\Sigma)}}:f^{-1}(\Sigma)\tto \mathbb{T}_{\infty}$ is a Lipschitz quotient map from a subset $f^{-1}(\Sigma)$ of a Banach space with property $(\beta)$.  We support all these claims in the upcoming subsections.
	\end{proof}

\subsection{Definition of the graph $\mathbb{M}_{\infty}$}\label{laakso}
\mbox{}

The graph $\mathbb{M}_{\infty}$ is defined after a variant of the Laakso construction (see for example \cite{Laakso2002, MendelNaor2010, CheegerKleiner2011}).  We define a sequence of graphs $(G_n)_{n\geq 1}$ such that $G_n\subseteq G_{n+1}$ for every $n\geq 1$ as follows.  The graph $G_1$ is the following ordered graph.  We start with a root.  The root has one immediate descendant.  This first descendant has countably infinitely many immediate descendants.  Each one of those second generation vertices has only one immediate descendant, which is a common immediate descendant to all of them.  The graph $G_1$ has diameter $3$.

	\begin{figure}[h] 
	   \centering
	   \scalebox{0.1}{\rotatebox{270.0}{\includegraphics{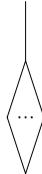}}}
	   \caption{The graph $G_1$.  The ``$\cdots$'' represents the fact that $G_1$ has infinitely many vertices at level $2$.}
	\end{figure}

To construct $G_{n+1}$ ($n\geq 1$), we take a copy of $G_1$ and we rescale it so each edge has length the diameter of $G_n$.  Next we replace each rescaled edge with a copy of $G_n$, matching the parent vertex of the rescaled edge with the oldest ancestor in the copy of $G_n$, and the child vertex of the rescaled edge with the youngest vertex in the copy of $G_n$.  This gives all the vertices of $G_{n+1}$.  Next we order the vertices of $G_{n+1}$ by going downward from the root all the way to the youngest generation, which is now at distance $3^{n+1}$ from the root.  Finally, we identify the root and the first $3^n$ generations of $G_{n+1}$ with (the root and the first $3^n$ generations of) $G_n$.  This way we have $G_n\subseteq G_{n+1}$.

	\begin{figure}[h] 
	   \centering
	   \scalebox{0.45}{\includegraphics{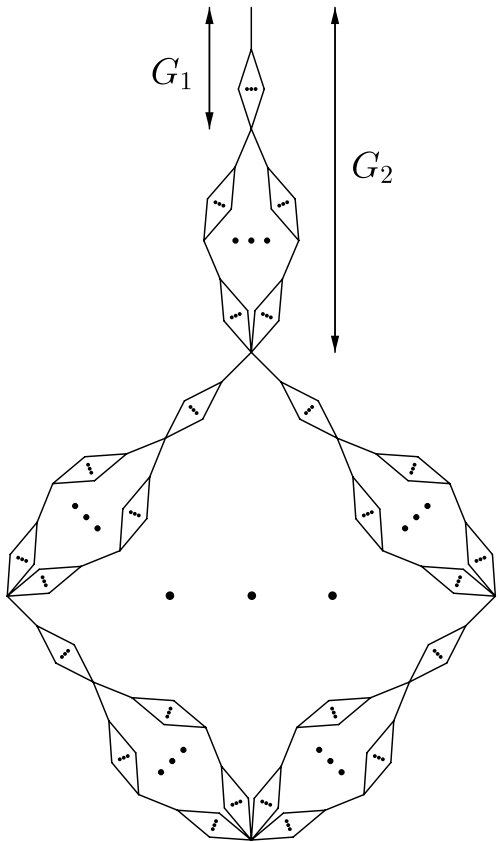}}
	   \caption{The graph $G_3$ with the subsets $G_1$ and $G_2$ highlighted.}
	\end{figure}

The graph $\mathbb{M}_{\infty}$ is defined to be the union $\displaystyle \bigcup_{n=1}^{\infty}G_n$, considered as a metric space with the shortest path metric.  The generational order of a given element of $\mathbb{M}_{\infty}$ will be called its {\sl level}.  Note that the level of an element is also the length of a shortest path from the root to that element.  We say that an element $\nu$ is a descendant of another element $\mu$, or that $\mu$ is an ancestor of $\nu$, both denoted $\mu<\nu$, if there is a shortest path from the root to $\nu$ that passes through $\mu$.

An element of $\mathbb{M}_{\infty}$ will be called {\sl non-branching} if it only has one immediate descendant; and it will be called {\sl branching} if it has infinitely many immediate descendants.  For each branching element of $\mathbb{M}_{\infty}$, we give a fraternal order among its immediate descendants by fixing a bijection between these descendants and the set $\mathbb{N}$ of the natural numbers.

\begin{rmk}
Laakso constructions, introduced in \cite{Laakso2000}, have appeared elsewhere as a tool to investigate the geometry of metric spaces, including that of Banach spaces.  The following list is not exhaustive: \cite{Tyson2005}, \cite{JohnsonSchechtman2009},  \cite{MendelNaor2010}, \cite{CheegerKleiner2011}, and recently \cite{Ostrovskii2013}.  We would like to point out the difference between our Laakso construction and that of \cite[Section 3]{MendelNaor2010} and \cite[Example 1.2]{CheegerKleiner2011} for example.  First, our basic block $G_1$ has diameter three instead of four.  Second, in the construction of our basic block $G_1$, the first generation vertex has infinitely many immediate descendants instead of just two as in the standard Laakso construction.  Third, when constructing $G_{n+1}$ from $G_n$, we do {\sl not} rescale down the copies of $G_n$.  In our case, $G_{n+1}$ has diameter three times as large as the diameter of $G_n$.  We also note that even if the Laakso construction in \cite{Laakso2002} can be made so that the basic block has diameter three, our construction is still different from the construction in \cite{Laakso2002}.

On the other hand we would like to mention that at the inception of this paper, our Laakso construction did have a basic block with diameter four.  The proof went all the way through.  However, as we prepared the final version of this article we realized that we did not need the last edge of the basic block, which brought us to the construction of $\mathbb{M}_{\infty}$ as it is presented here.

\end{rmk}


\subsection{Definition and properties of the map $\phi:\mathbb{T}_{\infty}\tto \mks$}{\label{phi}}
\mbox{}

We define $\phi(\rt)=\rt$.  By induction, if $\phi(\ov{n})$ is already defined for an element $\ov{n}$ of $\mathbb{T}_{\infty}$, and if $\ov{m}$ is an immediate descendant of $\ov{n}$ in $\mathbb{T}_{\infty}$, then
	\begin{itemize}
	\item set $\phi(\ov{m})$ to be the one immediate descendant of $\phi(\ov{n})$ if $\phi(\ov{n})$ is non-branching;
	\item otherwise, set $\phi(\ov{m})$ to be the immediate descendant of $\phi(\ov{n})$ in the same fraternal order among all immediate descendants of $\phi(\ov{n})$ as $\ov{m}$ is among all immediate descendants of $\ov{n}$.
	\end{itemize}

Before we list the properties of the map $\phi$, let us make the following definitions.

\begin{definition}
Let $f:X\to Y$ be a map between two metric spaces $X$ and $Y$.  Let $\mathcal{Q}$ be a subset of $X\times X$ and $\mathcal{R}$ a subset of $Y\times Y$.  We can, and will, think of $\mathcal{Q}$ and $\mathcal{R}$ as relations on $X$ and $Y$ respectively.  We say that the map $f:X\to Y$ is 
	\begin{itemize}
	\item $\mathcal{Q}$-Lipschitz if there exists $L^{\mathcal{Q}}>0$ such that for every $x, x'\in X$, if $x\mathcal{Q}x'$ then $d_Y(f(x),f(x'))\leq L^{\mathcal{Q}}\,d_X(x,x')$;
	\item $\mathcal{Q}$-Lipschitz for large distances, or $\mathcal{Q}$-coarse Lipschitz, if for every ${\delta}>0$ there exists $L^{\mathcal{Q}}({\delta})>0$ such that $d_Y(f(x),f(x'))\leq L^{\mathcal{Q}}({\delta})\,d_X(x,x')$ for every $x,x'\in X$ with $x\mathcal{Q}x'$ and with $d_X(x,x')\geq {\delta}$;
	\item $\mathcal{R}$-co-Lipschitz if there exists $c^{\mathcal{R}}>0$ such that for every $y,y'\in Y$ with $y\mathcal{R}y'$ and with $d_Y(y,y')\leq c^{\mathcal{R}}\,R$, for every $x\in f^{-1}(y)$, there exists $x'\in f^{-1}(y')$ with $d_X(x,x')\leq R$;
	
	\item $\mathcal{R}$-co-Lipschitz for large distances, or $\mathcal{R}$-coarse co-Lipschitz, if for every ${\delta}>0$ there exists $c^{\mathcal{R}}({\delta})>0$ such that for every $R\geq {\delta}$, for every $y,y'\in Y$ with $y\mathcal{R}y'$ and with $d_Y(y,y')\leq c^{\mathcal{R}}({\delta})\,R$, for every $x\in f^{-1}(y)$, there exists $x'\in f^{-1}(y')$ with $d_X(x,x')\leq R$.
	\end{itemize}
\end{definition}
In this article, the main relation we consider is that of ancestor-to-descendant.  
We now list the properties of the map $\phi$ in the following lemma.
	
\begin{lem}{\label{ppties of phi}}
\mbox{}
	\begin{enumerate}
	\item $\phi$ is surjective, and preserves levels.
	\item $\phi$ is $1$-Lipschitz.
	\item $\phi$ is ancestor-to-descendant $1$-co-Lipschitz in the following sense:
	$$\forall \mu,\nu\in\mks \text{ with } \mu<\nu, ~\forall \ov{n}\in \phi^{-1}(\mu), ~\exists \ov{m}\in \phi^{-1}(\nu) \text{ such that } d_{\mathbb{T}_{\infty}}(\ov{n},\ov{m})=d_{\mks}(\mu,\nu).$$
	\end{enumerate}

\end{lem}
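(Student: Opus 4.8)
The plan is to prove all three assertions by following geodesics and inducting on levels, the single structural input being that every edge of $\mks$ joins two vertices whose levels differ by exactly $1$. This last fact I would establish by induction on $n$ for the graphs $G_n$: it is clear in $G_1$, and in passing from $G_n$ to $G_{n+1}$ each rescaled edge runs between two levels at distance $\operatorname{diam}(G_n)=3^n$ (which equals the root-to-youngest-generation distance in $G_n$) and is replaced by a copy of $G_n$, so that copy occupies exactly the intermediate levels and, by the inductive hypothesis, supplies only edges between consecutive levels; since $\mks=\bigcup_n G_n$ the claim follows. Combined with the fact (already noted) that the level of a vertex equals its distance to the root, this shows that a geodesic from $\rt$ to any $\mu\in\mks$ is a chain $\rt=\mu_0<\mu_1<\cdots<\mu_k=\mu$ of immediate descendants with $k=\lev(\mu)$ (a geodesic of length $k=\lev(\mu)$ must raise the level by one at every step), and, more generally, that if $\mu<\nu$ then $d_{\mks}(\mu,\nu)=\lev(\nu)-\lev(\mu)$ and there is a chain $\mu=\mu_0<\mu_1<\cdots<\mu_k=\nu$ of immediate descendants of length $k=d_{\mks}(\mu,\nu)$.

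For (1), I would first observe that $\phi$ preserves levels, by induction on the level: it holds at the root, and if $\ov m$ is an immediate descendant of $\ov n$ then $\phi(\ov m)$ is, by the definition of $\phi$, an immediate descendant of $\phi(\ov n)$, so $\lev(\phi(\ov m))=\lev(\phi(\ov n))+1=\lev(\ov n)+1=\lev(\ov m)$. For surjectivity, given $\mu\in\mks$ I would take a geodesic $\rt=\mu_0<\cdots<\mu_k=\mu$ as above and lift it step by step: put $\ov n_0=\rt$, and given $\ov n_i\in\phi^{-1}(\mu_i)$ let $\ov n_{i+1}$ be an arbitrary immediate descendant of $\ov n_i$ if $\mu_i$ is non-branching, and otherwise the immediate descendant of $\ov n_i$ carrying the same fraternal index that $\mu_{i+1}$ carries among the immediate descendants of $\mu_i$; in either case the definition of $\phi$ gives $\phi(\ov n_{i+1})=\mu_{i+1}$, so $\ov n_k\in\phi^{-1}(\mu)$.

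Item (2) is then formal: $\phi$ sends edges to edges, since if $\ov m$ is an immediate descendant of $\ov n$ then $\phi(\ov n)$ and $\phi(\ov m)$ are adjacent in $\mks$; hence the $\phi$-image of a path of length $\ell$ in $\mtr$ is a walk of length $\ell$ in $\mks$ between the images of its endpoints, and applying this to a geodesic from $\ov n$ to $\ov m$ gives $d_{\mks}(\phi(\ov n),\phi(\ov m))\le d_{\mtr}(\ov n,\ov m)$. For item (3), fix $\mu<\nu$ in $\mks$ and $\ov n\in\phi^{-1}(\mu)$; take a chain $\mu=\mu_0<\cdots<\mu_k=\nu$ of immediate descendants with $k=d_{\mks}(\mu,\nu)$ as above, and lift it from $\ov n_0=\ov n$ exactly as in the surjectivity argument to obtain $\ov n_0,\ldots,\ov n_k$ with $\ov n_{i+1}$ an immediate descendant of $\ov n_i$ and $\phi(\ov n_i)=\mu_i$. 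Setting $\ov m=\ov n_k\in\phi^{-1}(\nu)$, the chain $\ov n=\ov n_0,\ldots,\ov n_k=\ov m$ shows $d_{\mtr}(\ov n,\ov m)\le k$, while level preservation forces $d_{\mtr}(\ov n,\ov m)\ge\lev(\ov m)-\lev(\ov n)=\lev(\nu)-\lev(\mu)=k$, whence $d_{\mtr}(\ov n,\ov m)=k=d_{\mks}(\mu,\nu)$, as required.

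The work here is light; the only point needing care is the structural claim that edges of $\mks$ join consecutive levels, and hence that geodesics between comparable vertices descend level by level through immediate descendants, after which the three parts are just bookkeeping with the recursive definition of $\phi$. I would also flag, for context, why the restriction to the ancestor-to-descendant relation in (3) is essential rather than a convenience: $\phi$ forgets the fraternal index of $\ov m$ whenever $\phi(\ov n)$ is non-branching, and because $\mtr$ is a tree, preimages lying on distinct branches are far apart — so for incomparable $\mu,\nu$ no co-Lipschitz estimate can hold, and it is precisely comparability that lets the lifting procedure produce a preimage of $\nu$ at the prescribed distance from the given preimage of $\mu$.
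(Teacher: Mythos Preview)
Your proof is correct and, for items (1) and (3), follows the paper's argument essentially verbatim: the paper dismisses (1) as ``easy'' (you supply the obvious details), and for (3) it performs the same node-by-node lift of a descending path from $\mu$ to $\nu$, branching according to fraternal order exactly as you do.

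The one genuine difference is in (2). The paper argues by case analysis: if $\ov n<\ov m$ it uses level preservation directly, and if $\ov n,\ov m$ are incomparable it compares the closest common ancestor of $\ov n,\ov m$ in $\mtr$ with the closest common ancestor of $\phi(\ov n),\phi(\ov m)$ in $\mks$, showing the latter sits at a level no lower than the image of the former. Your argument bypasses this: since $\phi$ sends each edge of $\mtr$ to an edge of $\mks$, the image of any geodesic in $\mtr$ is a walk of the same length in $\mks$, and the $1$-Lipschitz bound follows immediately with no case split. This is cleaner and more robust; the paper's route has the minor advantage of making explicit the role of common ancestors, which is conceptually relevant elsewhere, but for the bare inequality your edges-to-edges observation is the more economical proof. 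Your preliminary structural claim (edges of $\mks$ join consecutive levels, hence $d_{\mks}(\mu,\nu)=\lev(\nu)-\lev(\mu)$ for $\mu<\nu$) is something the paper uses tacitly without justification; making it explicit is a small improvement.
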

	\begin{proof}
	\mbox{ }
	\begin{enumerate}
	\item This claim is easy.
	\item Let $\ov{n},\ov{m}\in\mathbb{T}_{\infty}$.  We consider two cases.
		\begin{itemize}
		\item Case $\ov{n}<\ov{m}$:
		
			\begin{equation*}
			\begin{split}
			d_{\mks}(\phi(\ov{n}),\phi(\ov{m})) &=\lev(\phi(\ov{m}))-\lev(\phi(\ov{n}))\\
			\\
		 	& =\lev(\ov{m})-\lev(\ov{n})\\
		 	\\
		 	&=d_{\mathbb{T}_{\infty}}(\ov{n},\ov{m}).\\
			\end{split}
			\end{equation*}
		
		\item Case $\ov{n}$ and $\ov{m}$ are not comparable:
		
		Let $\ov{o}\in\mathbb{T}_{\infty}$ be the closest common ancestor to $\ov{n}$ and $\ov{m}$ in $\mtr$; and let $\mu\in\mks$ be the closest common ancestor to $\phi(\ov{n})$ and $\phi(\ov{m})$ in $\mks$.  Since $\phi(\ov{o})$ is also a common ancestor to $\phi(\ov{n})$ and $\phi(\ov{m})$, the definition of $\mu$ gives us that $\lev(\phi(\ov{o}))\leq \lev(\mu)$.  Hence 
		we have:
		
			\begin{equation*}
			\begin{split}
			d_{\mks}(\phi(\ov{n}),\phi(\ov{m})) &\leq d_{\mks}(\phi(\ov{n}),\mu)+d_{\mks}(\mu,\phi(\ov{m}))\\
			\\
			&= \lev(\phi(\ov{n}))-\lev(\mu)+\lev(\phi(\ov{m}))-\lev(\mu)\\
			\\
			&\leq \lev(\phi(\ov{n}))-\lev(\phi(\ov{o}))+\lev(\phi(\ov{m}))-\lev(\phi(\ov{o}))\\
			\\
			&=\lev(\ov{n})-\lev(\ov{o})+\lev(\ov{m})-\lev(\ov{o})\\
			\\
			&=d_{\mtr}(\ov{n},\ov{m}).\\
			\end{split}
			\end{equation*}
		\end{itemize}
	\item Let $\mu<\nu\in\mks$, and let $\ov{n}\in\phi^{-1}(\mu)$.  Choose {\sl a} path in $\mks$ from $\mu$ {\sl down} to $\nu$.  Record (every time such is defined) all the fraternity orders of all elements along that path.  Then, starting from $\mu$, lift that path node by node, inductively on the level of the node, as follows:
		\begin{itemize}
		\item $\phi(\ov{n})=\mu$.
		\item Assume all nodes on the path at distance less than or equal to $k$ from $\mu$ have been lifted.  Let $\zeta$ be on the path, at distance $k+1$ from $\mu$. Let $\zeta'$ be the immediate ancestor of $\zeta$ (which is well-defined since we are following a given path), and let $\ov{n'}\in \phi^{-1}(\zeta')$ be the lift of $\zeta'$ chosen according to the induction hypothesis.  We consider two cases.
			\begin{itemize}
			\item[$*$]If $\zeta'$ is a non-branching node, then lift $\zeta$ to any immediate descendant of $\ov{n'}$.
			\item[$*$] If $\zeta'$ is a branching node, then lift $\zeta$ to the immediate descendant of $\ov{n'}$ of the same fraternal order in $\mtr$ as $\zeta$ is among all immediate descendants of $\zeta'$.
			\end{itemize}
		\item Doing this way, we eventually get to an element $\ov{m}$ of $\mathbb{T}_{\infty}$ such that $\phi(\ov{m})=\nu$ with $\ov{m}>\ov{n}$ and $d_{\mks}(\mu,\nu)=d_{\mtr}(\ov{n},\ov{m})$.
		\end{itemize}
	\end{enumerate}
	\end{proof}
	

\subsection{Fork argument}{\label{forksection}}
\mbox{}

The fork argument was introduced in \cite[proof of Theorem 4.1]{LimaRandrianarivony2010} for maps that are coarse co-Lipschitz.  Here we reduce its hypothesis when we apply it to a map $\lambda$ that is only \atd coarse co-Lipschitz.

First let us fix some notation.  Consider a metric space $\mathcal{S}$ and a surjective map $\lambda:\mS\tto\mks$ that is \atd coarse co-Lipschitz.  For ${\delta}>0$, consider the (nonempty) set  $\mathcal{C}({\delta})$ of all $c>0$ such that for all $R\geq {\delta}$, for all $\sigma\in\mS$, and for all $\nu>\lambda(\sigma)$ in $\mks$ with $d_{\mks}(\lambda(\sigma),\nu)<cR$, there exists $\sigma'\in\mS$ with $d_{{\mathcal{S}}}(\sigma,\sigma')\leq R$ and $\lambda(\sigma')=\nu$.  Denote by $c_{\delta}^{\ATD}$ the supremum of the set $\mathcal{C}({\delta})$.  It is left to the reader to check (see also \cite[Lemma 3.3]{LimaRandrianarivony2010}) that $c_{\delta}^{\ATD}$ is attained, and that the family $\left(c_{\delta}^{\ATD}\right)_{{\delta}>0}$ is increasing.  Denote by $\displaystyle c_{\infty}^{\ATD}:=\sup \left\{c_{\delta}^{\ATD}: {\delta}>0\right\}=\lim_{{\delta}\to\infty} c_{\delta}^{\ATD}$.
	
\medskip

With these notation, we present the fork argument for \atd coarse co-Lipschitz maps.
	
\begin{prop}\label{forkarg}
For a metric space $\mS$, let $\lambda:\mS\tto\mks$ be a surjective map that is \atd co-Lipschitz for large distances.  Assume that the quantity $c_{\infty}^{\ATD}$ for the map $\lambda$ is finite.  Then for every $\eps>0$, one can find $r>0$ as large as we want, as well as elements $\mu_0$, $\mu_1$, and $(\mu_{2,k})_{k\geq 1}$ in $\mks$, and elements $\sigma_0$, $\sigma_1$, and $(\sigma_{2,k})_{k\geq 1}$ in $\mS$ so that $\lambda(\sigma_0)=\mu_0$, $\lambda(\sigma_1)=\mu_1$, and $\lambda(\sigma_{2,k})=\mu_{2,k}$ for every $k\geq 1$; with the $\sigma$'s and the $\mu$'s sitting in an (approximate) fork position as follows:
$$d_{\mks}(\mu_0,\mu_1)=d_{\mks}(\mu_1,\mu_{2,k})=\frac{d_{\mks}(\mu_0,\mu_{2,k})}{2}=r;$$
and
\begin{equation*}
\begin{split}
d_{\mathcal{S}}(\sigma_0,\sigma_1) &\leq (1+3\eps)\,\ffrac{r}{c_{\infty}^{\ATD}},\\
d_{\mathcal{S}}(\sigma_1,\sigma_{2,k}) &\leq (1+3\eps)\,\ffrac{r}{c_{\infty}^{\ATD}},\\
\frac{d_{\mathcal{S}}(\sigma_0,\sigma_{2,k})}{2}&> (1-80\eps)\,\ffrac{r}{c_{\infty}^{\ATD}}.\\
\end{split}
\end{equation*}
\end{prop}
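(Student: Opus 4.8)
Write $c:=c_{\infty}^{\ATD}$, which is finite by hypothesis, and fix a small $\eps>0$. The plan is to combine two facts. First, since $c$ is the \emph{supremum} of the increasing family $(c_{\delta}^{\ATD})_{\delta>0}$, the ATD co-Lipschitz estimate fails for every constant larger than $c$, so one can locate, somewhere in $\mS$, an almost optimally bad lifting configuration. Second, $\mks$ is self-similar and is therefore riddled, at every scale $3^m$, with ``forks carrying a reconvergence vertex'': the scaled copy of $G_1$ used to build $G_{m+1}$ consists of a skeleton-root, a branching vertex at distance $3^m$ below it, infinitely many tines at a further distance $3^m$ reached through distinct immediate descendants of that branching vertex (pairwise at distance $2\cdot3^m$), and a single reconvergence vertex at distance $3^m$ below every tine; moreover every vertex of $\mks$ whose level is divisible by $3^{m+1}$ is the skeleton-root of such a copy. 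Slotting one of these copies into the bad configuration will yield the asserted fork, and the reconvergence vertex is what makes the lower estimate work simultaneously for all $k$.

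\emph{Extracting the bad configuration.} First I would fix $\delta$ so large that $c_{\delta}^{\ATD}\ge c/(1+\eps)$ (possible since $c_{\delta}^{\ATD}\uparrow c$). Because $(1+\eps)c$ lies in no $\mathcal{C}(\delta)$, for every $\delta$ there exist $R\ge\delta$, a point $\sigma_*\in\mS$, and a point $\nu_*>p_*:=\lambda(\sigma_*)$ of $\mks$ with $d_{\mks}(p_*,\nu_*)<(1+\eps)cR$ while $d_{\mS}(\sigma_*,\tau)>R$ for every $\tau\in\lambda^{-1}(\nu_*)$. A short comparison with the ATD coarse co-Lipschitz property of $\lambda$ at a fixed threshold shows that $\ell:=d_{\mks}(p_*,\nu_*)$ cannot stay bounded as $\delta\to\infty$; so in particular we may assume $\ell$ is as large as we please (this is the one place where finiteness of $c$ and the large-distance hypothesis enter, and it is what makes the eventual $r$ large).

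\emph{Inserting the fork and lifting.} Let $r:=3^m$ be the largest power of $3$ with $6r\le\ell$. A downward geodesic from $p_*$ to $\nu_*$ meets a vertex of level divisible by $3^{m+1}$ within $3r$ of $p_*$; take the first such as the fork's $\mu_0$, so $p_*\le\mu_0$, $\Delta:=d_{\mks}(p_*,\mu_0)<3r$, and $\mu_0$ is the skeleton-root of a scale-$3^m$ copy of $G_1$. The geodesic then passes through that copy's branching vertex $\mu_1$ (distance $r$ below $\mu_0$), one of its tines, and its reconvergence vertex $\mu_3$ (distance $r$ below each tine) before continuing to $\nu_*$; thus $\mu_3\le\nu_*$ with $t:=d_{\mks}(\mu_3,\nu_*)=\ell-\Delta-3r\in[0,15r)$. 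Taking $(\mu_{2,k})_{k\ge1}$ to be all the tines, the image-side identities $d_{\mks}(\mu_0,\mu_1)=d_{\mks}(\mu_1,\mu_{2,k})=d_{\mks}(\mu_0,\mu_{2,k})/2=r$ hold on the nose (and $d_{\mks}(\mu_{2,k},\mu_{2,k'})=2r$). Now I would lift down the geodesic using ATD co-Lipschitz with constant $c_{\delta}^{\ATD}\ge c/(1+\eps)$, so that a descendant at distance $\rho$ lifts at cost $\le(1+\eps)^2\rho/c\le(1+3\eps)\rho/c$: push $\sigma_*$ to $\sigma_0\in\lambda^{-1}(\mu_0)$ at cost $\le(1+3\eps)\Delta/c$, then $\sigma_0$ to $\sigma_1\in\lambda^{-1}(\mu_1)$, then $\sigma_1$ to $\sigma_{2,k}\in\lambda^{-1}(\mu_{2,k})$, then $\sigma_{2,k}$ to $\sigma_{3,k}\in\lambda^{-1}(\mu_3)$; each of the last three steps costs $\le(1+3\eps)r/c$, and the middle two are exactly the two upper estimates claimed.

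\emph{The lower estimate, and the obstacle.} Here is the crucial point: since $\mu_3\le\nu_*$, lifting $\nu_*$ from \emph{any} preimage $\tau$ of $\mu_3$ costs at most $(1+3\eps)t/c$ and lands in $\lambda^{-1}(\nu_*)$, so $d_{\mS}(\sigma_*,\tau)>R-(1+3\eps)t/c$ for every preimage of $\mu_3$, hence for every $\sigma_{3,k}$ at once. Combining with $R>\ell/((1+\eps)c)=(\Delta+3r+t)/((1+\eps)c)$ and $d_{\mS}(\sigma_*,\sigma_0)\le(1+3\eps)\Delta/c$, and using $\Delta<3r$ and $t<15r$, a short triangle-inequality estimate gives $d_{\mS}(\sigma_0,\sigma_{3,k})>3r(1-25\eps)/c$, whence $d_{\mS}(\sigma_0,\sigma_{2,k})\ge d_{\mS}(\sigma_0,\sigma_{3,k})-d_{\mS}(\sigma_{2,k},\sigma_{3,k})>(2-78\eps)r/c$, which comfortably exceeds the required $2(1-80\eps)r/c$. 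Since $r=3^m\to\infty$ with $\ell$, the construction is complete. The part that really needs care is the structural lemma about $\mks$ used above --- that every vertex at a level divisible by $3^{m+1}$ is the skeleton-root of a scale-$3^m$ copy of $G_1$, and that a downward geodesic threads such a copy through its branching, tine and reconvergence vertices in turn --- which comes down to bookkeeping the levels at which the nested scaled copies of $G_1$ sit. The only other nuisance is respecting the large-distance thresholds at each use of the co-Lipschitz property (one arranges $\Delta$ and $t$ to be $0$ or at least $\delta$ by taking $r$ a power of $3$ or two smaller, which merely inflates the numerical constant), while keeping all image-side distances exact.
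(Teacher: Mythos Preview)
Your outline follows the paper's own argument closely: locate a near-optimal ``bad'' lifting configuration $(\sigma_*,\nu_*)$ using that $c$ is a supremum, slot a scale-$r$ copy of the $G_1$ skeleton into the geodesic from $p_*=\lambda(\sigma_*)$ to $\nu_*$, lift step by step, and extract the lower bound from the fact that every preimage of $\nu_*$ is far from $\sigma_*$. The paper does exactly this. Two cosmetic differences: the paper lifts directly from $\sigma_{2,k}$ to a preimage of $\nu$ (rather than stopping at $\mu_3$), and the paper places the fork by a two-step subdivision (first a segment $[a,b]$ of length $3^N$ with endpoints at levels in $3^N\mathbb{Z}$, then the middle third $[\mu_0,\mu_3]$, then the branching inside that), which in particular guarantees $d(\mu,\mu_0)\ge 3r$ and $d(\mu_3,\nu)\ge 3r$.

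The one place where your write-up is genuinely incomplete is exactly the ``nuisance'' you flag at the end, and your proposed fix (dropping $r$ a power of $3$) does not repair it. The issue is not only that $\Delta$ or $t$ may be small; it is that even the three lifts of length $r$ require the radius $\approx r/c_\delta^{\ATD}$ to exceed the \emph{same} threshold $\delta$ at which you found the bad configuration, and your inequalities only give $r\gtrsim c\delta/18$, i.e.\ $r/c_\delta^{\ATD}\gtrsim\delta/18<\delta$. The paper resolves this by working with \emph{two} thresholds: a fixed $\delta_0$ with $c_{\delta_0}^{\ATD}\ge c/(1+\eps)$, used for \emph{all} the lifts, and a much larger $\delta\ge 81\delta_0$ at which the bad configuration is found, which forces $r=3^{N-2}>c_\infty\delta_0$ so that every lifting radius exceeds $\delta_0$. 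The two-level placement of the fork is what makes $d(\mu,\mu_0)$ and $d(\mu_{2,k},\nu)$ at least $3r$ as well, so the first and last lifts also clear the $\delta_0$ threshold. Once you separate the two thresholds in this way your argument goes through with essentially the constants you wrote; without it the lifting steps are not justified.
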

	\begin{proof}
	Let $\eps>0$.  Let ${\delta}_0=\displaystyle \frac{3^{n_0}}{c_{\infty}^{\ATD}}$ for some large enough $n_0$ such that $c_{\infty}^{\ATD}<(1+\eps)c_{{\delta}_0}^{\ATD}$.  Then for any ${\delta}\geq {\delta}_0$, we have
	$$c_{{\delta}_0}^{\ATD}\leq c_{\delta}^{\ATD}\leq c_{\infty}^{\ATD}<(1+\eps)c_{{\delta}_0}^{\ATD}\leq (1+\eps)c_{{\delta}}^{\ATD}\leq (1+\eps)c_{\infty}^{\ATD}.$$
	Since $c_{{\delta}}^{\ATD}$ is a maximum, since $(1+\eps)c_{{\delta}}^{\ATD}>c_{{\delta}}^{\ATD}$, and because of the ancestor-to-descendant coarse co-Lipschitz condition on $\lambda$, we can find a number $R\geq {\delta}$, and elements $\sigma\in\mS$ and $\nu>\lambda(\sigma)\in\mks$ with $c_{{\delta}}^{\ATD}R\leq d_{\mks}(\nu,\lambda(\sigma))<(1+\eps)c_{{\delta}}^{\ATD}R$ such that
	$$\forall \sigma'\in\mS, ~\left(\lambda(\sigma')=\nu\right)~\Longrightarrow~ \left(d_{\mathcal{S}}(\sigma,\sigma')> R\right).$$
	In passing, note that we can then make our of choice of ${\delta}_0$ in such a way that $d_{\mks}(\nu,\lambda(\sigma))$ is as large as we want.
	
	\medskip
	
	Let us call $\mu:=\lambda(\sigma)$.  Let $N\in\mathbb{N}$ be such that $\displaystyle 3^N\leq \frac{d_{\mks}(\mu,\nu)}{2}<3^{N+1}$.  Since $\nu$ is a descendant of $\mu$, we have $d_{\mks}(\mu,\nu)=\lev(\nu)-\lev(\mu)$, and hence there must exist an integer $p\geq 0$ such that $\lev(\mu)\leq p3^N<(p+1)3^N\leq \lev(\nu)$.  In fact, one can take $p$ to be the largest integer such that $(p-1)3^N<\lev(\mu)$.  Furthermore, for ${\delta}\geq 81{\delta}_0$, we must necessarily have $3^{n_0}<3^{N-2}$.  In fact, ${\delta}\geq 54(1+\eps){\delta}_0$ if $\eps$ is small enough.  Then since $(1+\eps)\geq c_{\infty}^{\ATD}/c_{{\delta}}^{\ATD}$, and since $c_{\infty}^{\ATD}{\delta}_0=3^{n_0}$, we have
	$$2\cdot 3^{N+1}>d_{\mks}(\mu,\nu)\geq c_{{\delta}}^{\ATD}R\geq c_{{\delta}}^{\ATD}{\delta}\geq 6\cdot 9\cdot 3^{n_0}.$$
	
	\medskip
	
	We will now identify the elements $\mu_0$, $\mu_1$ and $\mu_{2,k}$($k\geq 1$) of $\mks$ that we need for our purpose.  First, we fix a path $\mathcal{P}$ downward from $\mu$ to $\nu$.  We consider the elements $a$ and $b$ of $\mathcal{P}$ at levels $p\cdot3^N$ and $(p+1)3^N$ respectively.  Note that the segment $[a,b]$ has length $3^N$.  Divide $[a,b]$ into three equal-length subsegments, and denote by $\mu_0$ the element of $\mathcal{P}$ at level $p\cdot3^N+3^{N-1}$, and by $\mu_3$ the one at level $p\cdot3^N+2\cdot3^{N-1}$.  Note that $d_{\mks}(\mu,\mu_0)\geq 3^{N-1}$ and $d_{\mks}(\mu_3,\nu)\geq 3^{N-1}$.  Divide the segment $[\mu_0,\mu_3]$ into three again, and consider the following vertices: the element $\mu_1$ of $\mathcal{P}$ at level $p\cdot3^N+3^{N-1}+3^{N-2}$, and the elements $\mu_{2,k}$ ($k\geq 1$) of $\mks$ at level $p\cdot 3^N+3^{N-1}+2\cdot3^{N-2}$ that lie between $\mu_1$ and the element $\mu_3$ already previously mentioned.  Note that although only one of the $\mu_{2,k}$'s belongs to the original path $\mathcal{P}$, the succession $\mu<\mu_0<\mu_1<\mu_{2,k}<\mu_3<\nu$ can be extended to a shortest path from $\mu$ to $\nu$ for any $k\geq 1$.
	
	\medskip
	
	Let us set $$d_{\mks}(\mu_1,\mu_0)=d_{\mks}(\mu_{2,k},\mu_1)=d_{\mks}(\mu_3,\mu_{2,k}):=r\,(=3^{N-2}\geq c_{\infty}^{\ATD}{\delta}_0).$$  Then note that $d_{\mks}(\mu_{2,k},\nu)=d_{\mks}(\mu,\nu)-d_{\mks}(\mu,\mu_0)-2r$ for each $k\geq 1$.
	We will now lift these elements one by one starting from $\mu$ using the \atd coarse co-Lipschitz condition.  
	\begin{enumerate}
	\item $d_{\mks}(\mu,\mu_0)<c_{{\delta}_0}^{\ATD}\cdot\ffrac{(1+\eps)}{c_{{\delta}_0}^{\ATD}}\,d_{\mks}(\mu,\mu_0)$; and $\ffrac{(1+\eps)}{c_{{\delta}_0}^{\ATD}}\,d_{\mks}(\mu,\mu_0)\geq {\delta}_0$ because $d_{\mks}(\mu,\mu_0)\geq 3^{N-1}\geq c_{\infty}^{\ATD}{\delta}_0\geq c_{{\delta}_0}^{\ATD}{\delta}_0$.  So there exists an element $\sigma_0\in\mS$ with $d_{\mathcal{S}}(\sigma,\sigma_0)\leq \ffrac{1+\eps}{c_{{\delta}_0}^{\ATD}}\,d_{\mks}(\mu,\mu_0)$ such that $\lambda(\sigma_0)=\mu_0$.
	
	\item $d_{\mks}(\mu_0,\mu_1)=r<c_{{\delta}_0}^{\ATD}\cdot\ffrac{1+\eps}{c_{{\delta}_0}^{\ATD}}\,r$; and $\ffrac{1+\eps}{c_{{\delta}_0}^{\ATD}}\,r\geq {\delta}_0$ since $r=3^{N-2}\geq c_{\infty}^{\ATD}{\delta}_0$.  So there is an element $\sigma_1\in\mS$ with $d_{\mathcal{S}}(\sigma_0,\sigma_1)\leq \ffrac{1+\eps}{c_{{\delta}_0}^{\ATD}}\,r$ such that $\lambda(\sigma_1)=\mu_1$.
	
	\item Similarly, $d_{\mks}(\mu_1,\mu_{2,k})=r$ for each $k\geq 1$, so there is an element $\sigma_{2,k}\in\mS$ with $d_{\mathcal{S}}(\sigma_1,\sigma_{2,k})\leq \ffrac{1+\eps}{c_{{\delta}_0}^{\ATD}}\,r$ such that $\lambda(\sigma_{2,k})=\mu_{2,k}$.
	
	\item Finally, $d_{\mks}(\mu_{2,k},\nu)=d_{\mks}(\mu,\nu)-d_{\mks}(\mu,\mu_0)-2r<c_{{\delta}_0}^{\ATD}\cdot\ffrac{1+\eps}{c_{{\delta}_0}^{\ATD}}\,[d_{\mks}(\mu,\nu)-d_{\mks}(\mu,\mu_0)-2r]$; and $\ffrac{1+\eps}{c_{{\delta}_0}^{\ATD}}\,[d_{\mks}(\mu,\nu)-d_{\mks}(\mu,\mu_0)-2r]\geq\ffrac{1+\eps}{c_{{\delta}_0}^{\ATD}}\,r\geq {\delta}_0$, so there exists an element $\sigma^{(k)}\in\mS$ with $d_{\mathcal{S}}(\sigma_{2,k},\sigma^{(k)})\leq \ffrac{1+\eps}{c_{{\delta}_0}^{\ATD}}\,[d_{\mks}(\mu,\nu)-d_{\mks}(\mu,\mu_0)-2r]$ such that $\lambda(\sigma^{(k)})=\nu$.
	\end{enumerate}
	
	\medskip
	

	We claim that as a result, we must have 
	$$d_{\mathcal{S}}(\sigma_{2,k},\sigma_0)>[1-(3^4-1)\eps]\cdot\ffrac{2r}{c_{{\delta}_0}^{\ATD}}.$$
	In fact by the choice of $\mu$ and $\nu$, we have $d_{\mathcal{S}}(\sigma,\sigma^{(k)})>R$.  So  if $d_{\mathcal{S}}(\sigma_{2,k},\sigma_0)\leq\alpha\cdot\ffrac{2r}{c_{{\delta}_0}^{\ATD}}$, then
	\begin{equation*}
	\begin{split}
	R&<d_{\mathcal{S}}(\sigma,\sigma^{(k)})\\
	\\
	&\leq d_{\mathcal{S}}(\sigma,\sigma_0)+d_{\mathcal{S}}(\sigma_0,\sigma_{2,k})+d_{\mathcal{S}}(\sigma_{2,k},\sigma^{(k)})\\
	\\
	&\leq \frac{1+\eps}{c_{{\delta}_0}^{\ATD}}\,d_{\mks}(\mu,\mu_0)+\frac{\alpha}{c_{{\delta}_0}^{\ATD}}\,2r+\frac{1+\eps}{c_{{\delta}_0}^{\ATD}}\,(d_{\mks}(\mu,\nu)-d_{\mks}(\mu,\mu_0)-2r)\\
	\\
	&=\frac{\alpha}{c_{{\delta}_0}^{\ATD}}\,2r+\frac{1+\eps}{c_{{\delta}_0}^{\ATD}}\,(d_{\mks}(\mu,\nu)-2r)\\
	\\
	&=\frac{d_{\mks}(\mu,\nu)}{c_{{\delta}_0}^{\ATD}}\,\left[t\alpha+(1-t)(1+\eps)\right] \text{, where } t=\frac{2r}{d_{\mks}(\mu,\nu)}\\
	\\
	&<\frac{(1+\eps)c_{\infty}^{\ATD}R}{c_{{\delta}_0}^{\ATD}}\,[t\alpha+(1-t)(1+\eps)]\\
	\\
	&<(1+\eps)^2[t\alpha+(1-t)(1+\eps)]R.
	\end{split}
	\end{equation*}
	This would require that $\alpha>(1+\eps)-\ffrac{1}{t}\left[(1+\eps)-\frac{1}{(1+\eps)^2}\right]$.  But from the definition of $N$, we have $d_{\mks}(\mu,\nu)<2\cdot3^{N+1}$.  So, since $r=3^{N-2}$, we have
	$$\frac{1}{t}=\frac{d_{\mks}(\mu,\nu)}{2r}<3^3,$$
	and hence
	$$
	\alpha>(1+\eps)-3^3\left[(1+\eps)-\frac{1}{(1+\eps)^2}\right]\geq(1+\eps)-3^3(3\eps).
	$$

	\medskip
	
	To summarize, we have $\sigma_0\in\lambda^{-1}(\mu_0)$, $\sigma_1\in\lambda^{-1}(\mu_1)$, and $\sigma_{2,k}\in\lambda^{-1}(\mu_{2,k})$ ($k\geq 1$), and with $\eps$ small enough we have
	$$d_{\mathcal{S}}(\sigma_0,\sigma_1)\leq (1+\eps)\,\frac{r}{c_{\delta_0}^{\ATD}}<(1+\eps)^2\,\frac{r}{c_{\infty}^{\ATD}}\leq(1+3\eps)\,\frac{r}{c_{\infty}^{\ATD}},$$
	and
	$$d_{\mathcal{S}}(\sigma_1,\sigma_{2,k})\leq (1+\eps)\,\frac{r}{c_{\delta_0}^{\ATD}}<(1+\eps)^2\,\frac{r}{c_{\infty}^{\ATD}}\leq(1+3\eps)\,\frac{r}{c_{\infty}^{\ATD}},$$
	but
	$$d_{\mathcal{S}}(\sigma_0,\sigma_{2,k})> (1-80\eps)\,\frac{2r}{c_{{\delta}_0}^{\ATD}}\geq (1-80\eps)\,\frac{2r}{c_{\infty}^{\ATD}}.$$
	This finishes the proof.
	\end{proof}

\subsection{Proof of Proposition \ref{mainprop}}
\mbox{}

We apply Proposition \ref{forkarg} to the set $\mathcal{S}:=f^{-1}\left(\Sigma\right)=f^{-1}\left(j^{-1}(\mtr)\right)$ and the map $\lambda=\phi\circ j\circ f_{|_{\mS}}$.  Recall that this particular map is also Lipschitz, and hence the hypothesis of Proposition \ref{forkarg} applies since $c_{{\delta}}^{\ATD}\leq \Lip(\lambda)$ for all ${\delta}>0$.  We find the elements $\mu_0, \mu_1, (\mu_{2,k})_ {k\geq 1}\in\mks$, and $\sigma_0, \sigma_1, (\sigma_{2,k})_{k\geq 1}\in\mS\subseteq X$ as given by Proposition \ref{forkarg}.  We have for all $k\geq 1$,
	$$\|\sigma_0-\sigma_1\|_{_X}\leq (1+3\eps)\,\frac{r}{c_{\infty}^{\ATD}},$$
	and
	$$\|\sigma_1-\sigma_{2,k}\|_{_X}\leq (1+3\eps)\,\frac{r}{c_{\infty}^{\ATD}},$$
	but
	$$\|\sigma_0-\sigma_{2,k}\|_{_X}> (1-80\eps)\,\frac{2r}{c_{\infty}^{\ATD}}.$$
	
	\medskip
	
On the other hand, since $\lambda$ is Lipschitz , we have for all $k\neq l$,
	$$\|\sigma_{2,k}-\sigma_{2,l}\|_{_X}\geq \frac{1}{\Lip(\lambda)}\,d_{\mks}(\mu_{2,k},\mu_{2,l})=\frac{1}{\Lip(\lambda)}\,2r.$$
Since $X$ has property $(\beta)$, this implies that we must have for $\eps$ small enough,
$$\ov{\beta}_X\left(\frac{c_{\infty}^{\ATD}}{\Lip(\lambda)}\right)\leq\ov{\beta}_X\left(\frac{2c_{\infty}^{\ATD}}{(1+3\eps)\Lip(\lambda)}\right)\leq \frac{83\eps}{1+3\eps}.$$
Since $\eps$ is arbitrary, we then get that
$$\ov{\beta}_X\left(\frac{c_{\infty}^{\ATD}}{\Lip(\lambda)}\right)=0.$$
This contradiction finishes the proof.

\hfill{$\square$}

\medskip

\section{Removing the reflexivity assumption}{\label{second note}}

James proves many characterizations of reflexivity in \cite{James1964a}.  We highlight in the following lemma the one that we need.  For this purpose, denote by $\btr$ the set of all finite subsets of $\mathbb{N}$, considered in graph-theoretic terms as a tree.  Note that this time $\btr$ is only considered as a tree to organize its elements, but we do not put any metric on $\btr$.  As a set, we have $\btr=\mtr$.

\begin{lem}[James]{\label{James31}}
Let $X$ be a non-reflexive Banach space.  Then for every $0<\theta<1$, there exists a sequence $(u_n)_{n\geq 1} \subseteq B_X$ such that
	\begin{enumerate}
	\item the map from $\btr$ to $X$ given by $\{n_1<n_2<\cdots <n_k\}\longmapsto u_{n_1}+\cdots+u_{n_k}$ is one-to-one,
	\item for every $n_1<\cdots<n_k$, we have
	$$\theta k \leq \|u_{n_1}+\cdots+u_{n_k}\|\leq k,$$
	\item and for every $n_1<\cdots<n_k<m_1<\cdots<m_l$, we have
	$$\frac{\theta}{3}(k+l)\leq \|(u_{n_1}+\cdots+u_{n_k})-(u_{m_1}+\cdots+u_{m_l})\|\leq k+l.$$
	\end{enumerate}
\end{lem}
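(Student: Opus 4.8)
The plan is to deduce the lemma from the classical biorthogonal-type characterization of reflexivity, which is one of the characterizations in \cite{James1964a}: since $X$ is non-reflexive, for the prescribed $\theta\in(0,1)$ there exist sequences $(x_n)_{n\geq1}\subseteq B_X$ and $(f_n)_{n\geq1}\subseteq B_{X^*}$ with $f_n(x_m)=\theta$ whenever $n\leq m$ and $f_n(x_m)=0$ whenever $n>m$. I would then simply set $u_n:=x_n$, so that $(u_n)_{n\geq1}\subseteq B_X$, and verify the three conditions by pairing sums of the $x_n$'s with suitable functionals from the list $(f_n)$. The two upper bounds in (2) and (3) are immediate from $\|u_n\|\leq 1$ and the triangle inequality, so only the injectivity and the two lower bounds need work.

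For (1), let $S\neq T$ be distinct finite subsets of $\mathbb{N}$ and let $q$ be the largest element of their symmetric difference, say $q\in S\setminus T$. Every index of $S\cup T$ that exceeds $q$ lies in $S\cap T$; writing $C$ for this common tail, one gets $f_q\bigl(\sum_{i\in S}x_i\bigr)=\theta(|C|+1)$ while $f_q\bigl(\sum_{j\in T}x_j\bigr)=\theta|C|$, so the two sums are distinct and the map from $\btr$ is one-to-one. For the lower bound in (2): since $n_1\leq n_i$ for every $i$ we have $f_{n_1}(x_{n_i})=\theta$, hence $f_{n_1}(u_{n_1}+\cdots+u_{n_k})=k\theta$ and $\|u_{n_1}+\cdots+u_{n_k}\|\geq\theta k$.

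For (3), write $A=u_{n_1}+\cdots+u_{n_k}$ and $B=u_{m_1}+\cdots+u_{m_l}$ with $n_1<\cdots<n_k<m_1<\cdots<m_l$. Since $n_i<m_1$ for all $i$ we have $f_{m_1}(x_{n_i})=0$, whereas $f_{m_1}(x_{m_j})=\theta$ for all $j$; thus $f_{m_1}(B-A)=l\theta$ and $\|A-B\|\geq l\theta$. Next, the functional $g:=f_{n_1}-f_{m_1}$ has $\|g\|\leq 2$ and satisfies $g(x_{n_i})=\theta-0=\theta$ and $g(x_{m_j})=\theta-\theta=0$, so $g(A-B)=k\theta$ and $\|A-B\|\geq \frac{1}{2}k\theta$. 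Combining the two estimates: if $k\leq 2l$ then $\frac{\theta}{3}(k+l)\leq l\theta\leq\|A-B\|$, while if $k>2l$ then $\frac{\theta}{3}(k+l)<\frac{1}{2}k\theta\leq\|A-B\|$; in either case $\frac{\theta}{3}(k+l)\leq\|A-B\|$, which is the required bound.

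The substantive input here is James's theorem itself — once the biorthogonal system $(x_n),(f_n)$ is in hand, everything reduces to short direct computations, so there is no genuine obstacle in the derivation. The one place that calls for a little care is condition (3): a single functional $f_j$ detects either the ``$k$-part'' or the ``$l$-part'' but not both, so one is forced to combine two functionals and then check that the resulting estimates do assemble into the stated constant $\frac{1}{3}$.
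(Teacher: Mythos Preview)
Your proof is correct and follows essentially the same route as the paper: both start from James's biorthogonal system $(u_n),(u_n^*)$ and verify each item by evaluating suitable functionals, using $u^*_{n_1}$ for (2) and $u^*_{m_1}$ for the $\theta l$ bound in (3). The only minor deviation is in the second estimate of (3): the paper applies $u^*_{n_1}$ directly to get $\|A-B\|\geq\theta|k-l|$ and then uses $\max\{l,|k-l|\}\geq\frac{1}{3}(k+l)$, whereas you form $g=f_{n_1}-f_{m_1}$ to obtain $\|A-B\|\geq\frac{1}{2}k\theta$ and split into the cases $k\le 2l$ and $k>2l$; both choices land on the same constant $\frac{\theta}{3}$.
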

	\begin{proof}
	Let $\theta\in(0,1)$.  From item (31) in \cite{James1964a}, we can find sequences $(u_n)_{n\geq 1}\subseteq B_X$ and $(u_n^*)_{n\geq 1} \subseteq B_{X^*}$ such that $u_n^*(u_k)=\theta$ if $n\leq k$, and $u_n^*(u_k)=0$ if $n>k$.\\
		\begin{enumerate}
		\item Let $\{n_1<\cdots<n_k\}\neq \{m_1<\cdots<m_l\}$.  Denote $\{n'_1<\cdots<n'_{s}\}=\{n_1<\cdots<n_k\}\Delta \{m_1<\cdots<m_l\}$, where $s\leq k+l$; and let signs $(\eps_i)_{1\leq i\leq s}\subseteq \{-1,1\}$ be such that
		$$(u_{n_1}+\cdots+u_{n_k})-(u_{m_1}+\cdots+u_{m_l})=\eps_1u_{n'_1}+\cdots+\eps_{s}u_{n'_{s}}.$$
		Then we have
		$$\|(u_{n_1}+\cdots+u_{n_k})-(u_{m_1}+\cdots+u_{m_l})\|\geq \left|u_{n'_{s}}\left(\eps_1u_{n'_1}+\cdots+\eps_{s}u_{n'_{s}}\right)\right|=|\eps_{s}\,\theta|>0.$$\\
		
		\item For $n_1<\cdots<n_k$, we have:
		$$\|u_{n_1}+\cdots+u_{n_k}\|\geq u_{n_1}^*(u_{n_1}+\cdots+u_{n_k})=\theta\,k.$$
		The other inequality follows from the triangle inequality.\\
		
		\item Let $n_1<\cdots<n_k<m_1<\cdots<m_l$.  On the one hand, we have
		$$\|(u_{n_1}+\cdots+u_{n_k})-(u_{m_1}+\cdots+u_{m_l})\|\geq \left|u_{m_1}^*\left((u_{n_1}+\cdots+u_{n_k})-(u_{m_1}+\cdots+u_{m_l})\right)\right|=\theta\,l,$$
		and on the other hand,
		$$\|(u_{n_1}+\cdots+u_{n_k})-(u_{m_1}+\cdots+u_{m_l})\|\geq \left|u_{n_1}^*\left((u_{n_1}+\cdots+u_{n_k})-(u_{m_1}+\cdots+u_{m_l})\right)\right|=\theta|k-l|.$$
		We get the left inequality by noting that $\max\{l,|k-l|\}\geq \frac13 (k+l)$.		
		Again the inequality on the right follows from the triangle inequality.
		\end{enumerate}
	\end{proof}
	
\medskip

We now state the main result in this section.

\begin{thm}{\label{thm2}}
Assume that a Banach space $Y$ is a uniform quotient of a Banach space $X$.  If $X$ has (an equivalent norm with) property $(\beta)$, then $Y$ has to be reflexive.
\end{thm}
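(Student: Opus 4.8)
The plan is to rerun the argument of Section~\ref{first note} with the bi-Lipschitz copy of $\mtr$ (available there only because $Y$ was reflexive) replaced by the ``James tree'' of Lemma~\ref{James31}. Suppose $Y$ is not reflexive and let $f\colon X\tto Y$ be a surjective uniform quotient; then $f$ is coarse Lipschitz and coarse co-Lipschitz, and $\|f(x)-f(x')\|\le\Omega(\|x-x'\|)$ for the modulus $\Omega$ occurring in the uniform quotient condition for $f$. Fix $\theta=\tfrac12$, let $(u_n)_{n\ge1}\subseteq B_Y$ and $(u_n^*)_{n\ge1}\subseteq B_{Y^*}$ be as in the proof of Lemma~\ref{James31}, and define $\Phi\colon\btr\tto Y$ by $\Phi(\{n_1<\dots<n_k\})=u_{n_1}+\dots+u_{n_k}$, the empty set going to $0$. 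Put $\Sigma=\Phi(\btr)\subseteq Y$; by item~(1) of Lemma~\ref{James31} the map $\Phi$ is a bijection onto $\Sigma$. Since $\btr=\mtr$ as sets and $\phi$ of Subsection~\ref{phi} was defined purely combinatorially, $\phi$ also makes sense on $\btr$, and we set
$$\lambda\ =\ \phi\circ\Phi^{-1}\circ f_{|_{f^{-1}(\Sigma)}}\colon\ f^{-1}(\Sigma)\ \tto\ \mks.$$
Two facts about $\Phi$ will be used constantly: if $\ov{n}<\ov{m}$ in $\btr$ then $\theta\,d_{\mtr}(\ov{n},\ov{m})\le\|\Phi(\ov{n})-\Phi(\ov{m})\|\le d_{\mtr}(\ov{n},\ov{m})$, by item~(2) of Lemma~\ref{James31}; and for \emph{arbitrary} $\ov{n},\ov{m}$, evaluating $\Phi(\ov{n})-\Phi(\ov{m})$ on $u_c^*$ with $c$ the least index occurring in $\ov{n}\,\triangle\,\ov{m}$ above their common initial segment gives at once the ``level'' estimate $\theta\,|\lev(\ov{n})-\lev(\ov{m})|\le\|\Phi(\ov{n})-\Phi(\ov{m})\|$.

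Next I would verify that $\lambda$ satisfies the hypotheses of the fork argument, Proposition~\ref{forkarg}. Surjectivity is immediate from Lemma~\ref{ppties of phi}(1) and surjectivity of $f$. For the \atd coarse co-Lipschitz property, given $\sigma$, $R\ge{\delta}$, and $\nu>\lambda(\sigma)$ with $d_{\mks}(\lambda(\sigma),\nu)$ small, Lemma~\ref{ppties of phi}(3) produces a tree-descendant $\ov{m}$ of $\ov{n}:=\Phi^{-1}(f(\sigma))$ with $\phi(\ov{m})=\nu$ and $d_{\mtr}(\ov{n},\ov{m})=d_{\mks}(\lambda(\sigma),\nu)$; since $\|f(\sigma)-\Phi(\ov{m})\|\le d_{\mtr}(\ov{n},\ov{m})$, the coarse co-Lipschitz property of $f$ lifts $\Phi(\ov{m})$ to some $\sigma'\in f^{-1}(\Sigma)$ with $d_X(\sigma,\sigma')\le R$ and $\lambda(\sigma')=\nu$, so $c_{\delta}^{\ATD}(\lambda)>0$. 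The genuinely new point is that $c_{\infty}^{\ATD}(\lambda)<\infty$: here $\lambda$ is not Lipschitz, so the reasoning of Subsection~\ref{forksection} does not apply, and instead I show that any lift takes place at large scale. If $\sigma'$ lifts some $\nu>\mu:=\lambda(\sigma)$ with $d_X(\sigma,\sigma')\le R$, the level estimate gives $\|f(\sigma)-f(\sigma')\|\ge\theta\,d_{\mks}(\mu,\nu)$; comparing with $\|f(\sigma)-f(\sigma')\|\le\Omega(d_X(\sigma,\sigma'))$ forces $d_X(\sigma,\sigma')\ge1$ once $d_{\mks}(\mu,\nu)$ exceeds a fixed threshold, whereupon the coarse Lipschitz property at scale $1$ gives $d_{\mks}(\mu,\nu)\le\theta^{-1}L(1)\,R$. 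Choosing, for each ${\delta}$ and each $c>\theta^{-1}L(1)$, a large $R\ge{\delta}$ and a descendant $\nu$ of $\lambda(\sigma)$ with $\theta^{-1}L(1)R<d_{\mks}(\lambda(\sigma),\nu)<cR$, one sees that no lift exists, i.e. $c_{\delta}^{\ATD}(\lambda)\le\theta^{-1}L(1)$ for every ${\delta}$, hence $c_{\infty}^{\ATD}(\lambda)<\infty$.

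Proposition~\ref{forkarg} now yields, for each small $\eps>0$ and each large $r$, elements $\mu_0,\mu_1,(\mu_{2,k})_{k\ge1}\in\mks$ and $\sigma_0,\sigma_1,(\sigma_{2,k})_{k\ge1}\in f^{-1}(\Sigma)\subseteq X$ in approximate fork position. In the construction of that proposition $\mu_1$ is a branching vertex of $\mks$ and is the closest common ancestor of the $\mu_{2,k}$, with $d_{\mks}(\mu_1,\mu_{2,k})=r$. Lifting each $\mu_{2,k}$ from $\ov{p}_1:=\Phi^{-1}(f(\sigma_1))$ via Lemma~\ref{ppties of phi}(3), and using the freedom there at non-branching nodes together with the freedom in choosing the downward path at branching nodes to take the smallest available value at every step after the first, we may choose $\sigma_{2,k}$ so that $\ov{m}_k:=\Phi^{-1}(f(\sigma_{2,k}))$ is a tree-descendant of $\ov{p}_1$ for which $\ov{m}_k\setminus\ov{p}_1$ is a block of $r$ consecutive integers; the minima of these blocks are pairwise distinct, since the $\mu_{2,k}$ lie in distinct branches at $\mu_1$ and the first step of the lift is forced by the branch. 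Passing to a subsequence $(\sigma_{2,k_j})_j$ along which those minima are at least $r$ apart makes the blocks $\ov{m}_{k_j}\setminus\ov{p}_1$ pairwise disjoint and ordered, so item~(3) of Lemma~\ref{James31} gives $\|f(\sigma_{2,k_j})-f(\sigma_{2,k_{j'}})\|\ge\tfrac{2\theta}{3}r$ for $j\ne j'$. Using $\Omega$ to see these points lie at distance $\ge{\delta}_1$ for a fixed ${\delta}_1>0$, and then the coarse Lipschitz property of $f$ at scale ${\delta}_1$, we get $\|\sigma_{2,k_j}-\sigma_{2,k_{j'}}\|\ge c_0\,r$ with $c_0>0$ depending only on $f$ and $\theta$.

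Finally, put $\rho=(1+3\eps)\,r/c_{\infty}^{\ATD}$, $x=(\sigma_0-\sigma_1)/\rho$, and $x_j=(\sigma_{2,k_j}-\sigma_1)/\rho$. The fork inequalities give $x,x_j\in B_X$, $\sep\big((x_j)_j\big)\ge c_0c_{\infty}^{\ATD}/(1+3\eps)=:t_\eps$, and $\inf_j\|x-x_j\|/2\ge(1-80\eps)/(1+3\eps)$, so $\ov\beta_X(t_\eps)\le 83\eps/(1+3\eps)$. Letting $\eps\to0$, and using that $\ov\beta_X$ is nondecreasing while $t_\eps\to c_0c_{\infty}^{\ATD}>0$, we conclude that $\ov\beta_X(t)=0$ for some fixed $t>0$, contradicting property~$(\beta)$ of $X$; hence $Y$ is reflexive. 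The two steps where something genuine must be done are the two flagged above---establishing $c_{\infty}^{\ATD}(\lambda)<\infty$ and extracting a properly separated sequence $(\sigma_{2,k_j})_j$---and both are handled by the same device: the configurations produced by the fork argument live at arbitrarily large scale, where the coarse Lipschitz estimate for $f$ is in force, and the ``in order'' lower bound of item~(3) of James's lemma supplies the separation that the non-Lipschitz map $\lambda$ cannot.
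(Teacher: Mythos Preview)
Your overall strategy matches the paper's: replace the bi-Lipschitz tree by the James tree, verify that the composite $\lambda$ is ATD coarse co-Lipschitz with $c_\infty^{\ATD}<\infty$, apply the fork argument, and produce a separated subsequence of the $\sigma_{2,k}$'s to contradict property $(\beta)$. Your verification of $c_\infty^{\ATD}<\infty$ via the level estimate is correct (and more explicit than the paper's one-line appeal to $\ttilde\phi$ being ATD-Lipschitz).

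The gap is in the separation step. You propose to \emph{re-choose} the $\sigma_{2,k}$'s so that $\ov{m}_k=\Phi^{-1}(f(\sigma_{2,k}))$ is a prescribed descendant of $\ov{p}_1$. But the fork inequality $\|\sigma_1-\sigma_{2,k}\|\le(1+3\eps)\,r/c_\infty^{\ATD}$ is obtained inside Proposition~\ref{forkarg} by invoking the \emph{abstract} ATD co-Lipschitz property of $\lambda$ at its optimal constant $c_{\delta_0}^{\ATD}$; nothing guarantees that your particular two-stage lift (Lemma~\ref{ppties of phi}(3) followed by the coarse co-Lipschitz property of $f$) achieves that same bound. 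The constant your lift realizes is governed by $c(f)$ together with the crude estimate $\|\Phi(\ov{p}_1)-\Phi(\ov{m}_k)\|\le r$, and this may be strictly smaller than $c_\infty^{\ATD}(\lambda)$---for instance if some other $\ov{m}'\in\phi^{-1}(\mu_{2,k})$ has $\Phi(\ov{m}')$ much closer to $f(\sigma_1)$ in $Y$. With a suboptimal constant $c'<c_\infty^{\ATD}$ you must take $\rho=(1+3\eps)r/c'$, and then $\|x-x_j\|/2$ is only bounded below by roughly $c'/c_\infty^{\ATD}<1$, so no contradiction with $(\beta)$ follows.

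The paper avoids this by treating Proposition~\ref{forkarg} as a black box and accepting whatever $J^{(k)}:=\Phi^{-1}(f(\sigma_{2,k}))$ arises. Since $\phi$ is $1$-Lipschitz, $d_{\mtr}(J^{(k)},J^{(l)})\ge 2r$, so any two $J^{(k)}$'s branch at least $r$ generations back. One then locates the maximal-level vertex $J$ with infinitely many $J^{(k)}$'s below it; maximality forces these to split among distinct children of $J$, each child carrying only finitely many, and a diagonal extraction yields a subsequence whose tails $I^{(n_j)}=J^{(k_{n_j})}\setminus J$ (each of size $\ge r$) are successively ordered. James item~(3) then provides the $Y$-separation, and the rest proceeds exactly as in your final paragraph. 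This post-hoc combinatorial extraction is what you should substitute for the attempt to control $\ov{m}_k$ in advance.
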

	\begin{proof}
	Let $f:X\tto Y$ be a surjective uniform quotient map.  Note that $f$ is \LLD and coarse co-Lipschitz.  Assume that $Y$ is not reflexive.  We apply James' characterization with $\theta=3/4$.  For $J=\{n_1<n_2<\ldots<n_k\}\in\btr$, set $v_{J}:=u_{n_1}+\ldots+u_{n_k}$.  We then get that if $\max J<\min J'$, then
	\begin{equation}{\label{James}}
	\begin{cases}
	\ffrac14 |J|\leq\|v_J\|_{_Y}\leq |J|,\\
	\\
	\ffrac14 |J'|\leq\|v_{J'}\|_{_Y}\leq |J'|,\\
	\\
	\ffrac14 \left(|J|+|J'|\right)\leq\|v_J-v_{J'}\|_{_Y}\leq \left(|J|+|J'|\right).
	\end{cases}
	\end{equation}
	Consider the subset $\ctr:=\{v_{J} \in Y: J\in\btr\}\subseteq Y$, which {\sl is} a metric space.  Since the correspondence $J\longmapsto v_{J}$ is one-to-one, the set $\ctr$ is in one-to-one and onto correspondence with $\btr$, and we extend the notions of levels, ancestors, and descendants to elements of $\ctr$ as well.  Consider the composition
	$$\ttilde{\phi}:=(\phi\circ {_{^\equiv}}): \ctr ~\equiv~ \mtr \tto\mks,$$
	where $\phi$ is the same map as in Subsection \ref{phi}, and $\equiv$ is the natural bijection between $\ctr$ and $\mtr$.  The bijection $\ctr \equiv \mtr$ is \atd bi-Lipschitz by James' characterization as expressed in (\ref{James}) above.  And hence the composition $\ttilde{\phi}: \ctr  \tto \mks$, considered as a map between metric spaces, is \atd co-Lipschitz.

	\medskip

	Next, consider the restriction $f_{|_{{f^{-1}(\ctr)}}}:f^{-1}(\ctr)\tto \ctr\subseteq Y$, which is a surjective uniform quotient mapping that is \LLD and coarse co-Lipschitz.  As a result, the composition $\displaystyle \ttilde{\lambda}:=\ttilde{\phi} \circ f_{|_{{f^{-1}(\ctr)}}}: f^{-1}(\ctr)\tto \mks$ is also \atd co-Lipschitz.  Note that $c_{\infty}^{\ATD}(\ttilde{\lambda})$ is finite since the inequalities (\ref{James}) also give that the map $\ttilde{\phi}$ is \atd Lipschitz.  We can thus apply the fork argument (Proposition \ref{forkarg}) to $\ttilde{\lambda}$, producing elements $\mu_0$, $\mu_1$, $\mu_{2,k}$ ($k\geq 1$) in $\mks$ with $d_{\mks}(\mu_0,\mu_1)=d_{\mks}(\mu_1,\mu_{2,k})=d_{\mks}(\mu_0,\mu_{2,k})/2=r=3^{N-2}$; as well as respective $\ttilde{\lambda}$-preimages $\sigma_0$, $\sigma_1$, $\sigma_{2,k}$ ($k\geq 1$) in $f^{-1}(\ctr)\subseteq X$ with
	\begin{equation}{\label{smallseparation}}
	\begin{cases}
	\|\sigma_0-\sigma_1\|_{_X},\|\sigma_1-\sigma_{2,k}\|_{_X}\leq (1+3\eps)\,\ffrac{r}{c_{\infty}^{\ATD}(\ttilde{\lambda})}\\
	\\
	\|\sigma_0-\sigma_{2,k}\|_{_X}> (1-80\eps)\,\ffrac{2r}{c_{\infty}^{\ATD}(\ttilde{\lambda})}.\\
	\end{cases}
	\end{equation}

	\medskip

	Let us denote $v^{(0)}:=f(\sigma_0)$, $v^{(1)}:=f(\sigma_1)$, and $v_{{J^{(k)}}}:=f(\sigma_{2,k})$ (for all $k\geq 1$), which  are elements of the subset $\ctr$ of $Y$.  Recall that the map $\phi$ preserves levels, so
	$$\begin{cases}
	\lev(v^{(0)})=\lev(\mu_0),\\
	\\
	\lev(v^{(1)})=\lev(\mu_1)=\lev(\mu_0)+3^{N-2},\\
	\\
	\lev(v_{{J^{(k)}}})=\lev(\mu_{2,k})=\lev(\mu_1)+3^{N-2}.\\
	\end{cases}
	$$
	But $\lev(v_{{J^{(k)}}})=|J^{(k)}|$, so $|J^{(k)}|=\lev(\mu_1)+3^{N-2}$ for all $k\geq 1$.

	\medskip

	Consider the set of all elements of $\btr$ with infinitely (not necessarily immediate) descendants among the $J^{(k)}$'s.  This set is nonempty since the root $\emptyset$ belongs to it.  Now let $J$ be an element of this set such that $\lev(J)=|J|$ is maximal.  Let $\left(J^{(k_n)}\right)_{n\geq 1}$ be the infinitely many descendants of $J$ among the $J^{(k)}$'s.  Write $J^{(k_n)}=:J\cup I^{(n)}$ with $\max J<\min I^{(n)}$.  Then for every $n\geq 1$, we must have
	\begin{equation}{\label{cardinality}}
	|I^{(n)}|\geq 3^{N-2}.
	\end{equation}
	In fact, the map $\phi: \mtr\tto \mks$ is $1$-Lipschitz, so for any $k\neq j$ we have
	$$2\cdot 3^{N-2} =d_{\mks}(\mu_{2,k},\mu_{2,j})\leq d_{\mtr}(J^{(k)},J^{(j)}),$$
	i.e. the closest common ancestor to $J^{(k)}$ and $J^{(j)}$ is at least $3^{N-2}$ generations back.
	
	\medskip

	Next, consider the immediate descendants $\left(J\cup \{m\}\right)_{m>\max(J)}$ of $J$.  By the fact that $\lev(J)$ is maximal among the elements of $\btr$ with infinitely many descendants among the $J^{(k)}$'s, we must find an infinite sequence $m_1<m_2<\cdots$ such that 
		\begin{itemize}
		\item each $J\cup\{m_j\}$ has descendants among the $J^{(k_n)}$'s,
		\item and each such $J\cup\{m_j\}$ has only finitely many descendants among the $J^{(k_n)}$'s.
		\end{itemize}
	Take a further subsequence of $(m_j)_{j\geq 1}$, still denoted $(m_j)_{j\geq 1}$, such that if we denote by $J^{\left(k_{n_j}\right)}=J\cup I^{(n_j)}$ the 
	descendant of $J\cup\{m_j\}$ with $\max\left(I^{(n_j)}\right)$ being maximal, then $\max(I^{(n_j)})<m_{j+1}=\min(I^{(n_{j+1})})$.
	
	Then, by James, we have
	$$\left\|v_{{J^{(k_{n_j})}}}-v_{{J^{(k_{n_{j'}})}}}\right\|_{_Y}\geq\ffrac14\left(|I^{(n_j)}|+|I^{(n_{j'})}|\right)\geq \frac12\cdot3^{N-2}.$$
	(The last inequality comes from inequality (\ref{cardinality}).)
	Using the Lipschitz condition for large distances on $f$, we get
	$$\left\|\sigma_{2,k_{n_j}}-\sigma_{2,k_{n_{j'}}}\right\|_{_X}\geq \ffrac1L\cdot 3^{N-2}=\ffrac{1}{L} \cdot r$$
	for some constant $L>0$ independent of $\eps$.  Indeed we can take $L$ to be such that $L/2$ is the Lipschitz constant for $f$ for distances larger than $\inf\{\|\sigma_{2,k_{n_j}}-\sigma_{2,k_{n_{j'}}}\|_{_X}:j\neq j'\}$.  Note that since $f$ is uniformly continuous, $\inf\{\|\sigma_{2,k_{n_j}}-\sigma_{2,k_{n_{j'}}}\|_{_X}:j\neq j'\}$ is strictly positive and independent of $\eps$.  In fact we have $\inf\{\|v_{J^{(k)}}-v_{J^{(k')}}\|_{_Y}:k\neq k'\}\geq \theta=3/4$ by the proof of item (1) in Lemma \ref{James31}.  So $\inf\{\|\sigma_{2,k_{n_j}}-\sigma_{2,k_{n_{j'}}}\|_{_X}:j\neq j'\}\geq \eta$, where $\eta>0$ is such that $\|\sigma-\sigma'\|_{_X}<\eta$ implies $\|f(\sigma)-f(\sigma')\|_{_Y}<3/4$.
	
	\medskip
	
	Combining this with the distances expressed in (\ref{smallseparation}), and using property $(\beta)$ on $X$, we must have for $\eps$ small enough,
$$\ov{\beta}_X\left(\frac{c_{\infty}^{\ATD}(\ttilde{\lambda})}{2L}\right)\leq\ov{\beta}_X\left(\frac{c_{\infty}^{\ATD}(\ttilde{\lambda})}{(1+3\eps)L}\right)\leq \frac{83\eps}{1+3\eps}.$$
Since $\eps$ is arbitrary, we then get that
$$\ov{\beta}_X\left(\frac{c_{\infty}^{\ATD}(\ttilde{\lambda})}{2L}\right)=0,$$
a contradiction.
	\end{proof}

\begin{rmk}
A superreflexive Banach space is one that has an equivalent norm which is uniformly convex (see for example \cite[Appendix A]{BenyaminiLindenstrauss2000}).  A result of Bates, Johnson, Lindenstrauss, Preiss, and Schechtman \cite{BatesJohnsonLindenstraussPreissSchechtman1999} shows that a Banach space that is a uniform quotient of a superreflexive Banach space is itself superreflexive.  Our result (by combining Theorem \ref{thm1} and Theorem \ref{thm2}) gives a generalization of this for the case of separable Banach spaces.  In fact, a uniformly convex Banach space has property $(\beta)$ (see \cite{Rolewicz1986}); but there are separable spaces with property $(\beta)$ which are not superreflexive (see for example \cite{Kutzarova1989}).

In \cite{BaudierKaltonLancien2010} Baudier, Kalton and Lancien show that a Banach space which coarse Lipschitz embeds into a separable Banach space with an equivalent norm with property $(\beta)$ also has an equivalent norm with property $(\beta)$.  Our result here gives the analog of this for uniform quotients.
\end{rmk}

\medskip
	
\section{Quantitative results}{\label{Denka's notes}}

Recall the following asymptotic moduli.

\begin{definition}  Let $X$ be an infinite-dimensional Banach space.  Let $t\in (0,1]$.
\begin{enumerate}

\item Modulus of asymptotic uniform convexity (AUC) \cite{Milman1971, Huff1980, JohnsonLindenstraussPreissSchechtman2002}:
\[
\ov{\delta}(t) = \inf_{\|x\|=1} ~\sup_{X_0} ~\inf_{{z\in X_0}\atop{{~}\atop{\|z\|\geq t}}} \{ \|x+z\|-1\},
\]
where $X_0$ runs through all closed subspaces of $X$ of finite codimension.  The Banach space $X$ is said to be asymptotically uniformly convex (AUC) if $\ov{\delta}(t)>0$ for $t>0$.  A reflexive Banach space that is AUC is in particular called nearly uniformly convex (NUC).

\item Modulus of asymptotic uniform smoothness (AUS) \cite{Milman1971, Prus1989, JohnsonLindenstraussPreissSchechtman2002}:
\[
\ov{\rho}(t) = \sup_{\|x\|=1} ~\inf_{X_0} ~\sup_{{z\in X_0}\atop{{~}\atop{\|z\|\leq t}}} \{ \|x+z\|-1\},
\]
where $X_0$ runs through all closed subspaces of $X$ of finite codimension.  The Banach space $X$ is said to be asymptotically uniformly smooth (AUS) if $
\displaystyle \lim_{t\to 0}\ov{\rho}(t)/t=0$.  A reflexive Banach space that is AUS is in particular called nearly uniformly smooth (NUS).

\item $(\beta)$-modulus \cite{AyerbeDominguez_BenavidesCutillas1994}:
\[
\ov{\beta}(t) = 1 - \sup \left\{ \inf \left\{
\frac{\|x+x_n\|}{2} : n\geq 1 \right\} \right\},
\]
where the supremum is taken over all $x\in B_X$ and all sequences $(x_n)_{n\geq 1}\subseteq B_X$ with $\sep\left((x_n)_{n\geq 1}\right)\geq t$.
The Banach space $X$ is said to have property $(\beta)$ if $\ov{\beta}(t)>0$ for $t>0$.
\end{enumerate}
\end{definition}

\medskip

\begin{lem}{\label{moduli}}
For any infinite-dimensional Banach space $X$, and any $t\in (0,1/2]$, we have $\ov{\beta}_X(t)\leq \ov{\delta}_X(2t)$.
\end{lem}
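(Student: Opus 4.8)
The plan is to prove the equivalent inequality $\ov{\delta}_X(2t)\ge\ov{\beta}_X(t)$. Since $\ov{\delta}_X(2t)$ is an infimum over the unit sphere, I fix an arbitrary $x\in X$ with $\|x\|=1$ and aim to show
\[
\sup_{X_0}\ \inf\bigl\{\|x+z\|-1:\ z\in X_0,\ \|z\|\ge 2t\bigr\}\ \ge\ \ov{\beta}_X(t),
\]
the supremum running over closed finite-codimensional subspaces $X_0$; taking the infimum over $\|x\|=1$ then gives the lemma. Write $b:=\ov{\beta}_X(t)$. If $b=0$ there is nothing to prove (and anyway $\ov{\delta}_X(2t)\ge 0$: for a norming functional $f$ of $x$ one has $\|x+z\|\ge f(x+z)=1$ for all $z\in\ker f$), so assume $b>0$, fix $\eta\in(0,b)$, and fix $f\in X^*$ with $\|f\|=1$ and $f(x)=1$. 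It suffices to produce, for this $\eta$, one finite-codimensional $X_0\subseteq\ker f$ with $\|x+z\|\ge 1+b-\eta$ whenever $z\in X_0$ and $\|z\|\ge 2t$; then $\sup_{X_0}\inf\{\|x+z\|-1\}\ge b-\eta$, and letting $\eta\downarrow 0$ gives the displayed inequality. The key idea is to search for $X_0$ inside $\ker f$.

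Suppose no finite-codimensional subspace of $\ker f$ has that property. Then by an iterated Hahn--Banach argument I build a $2t$-separated sequence $(z_n)_{n\ge 1}\subseteq\ker f$ with $\|z_n\|\ge 2t$ and $\|x+z_n\|<1+b-\eta$ for all $n$: set $X_0^{(1)}:=\ker f$; given the finite-codimensional $X_0^{(n)}\subseteq\ker f$, the failure hypothesis supplies $z_n\in X_0^{(n)}$ with $\|z_n\|\ge 2t$ and $\|x+z_n\|<1+b-\eta$; choose $g_n\in X^*$ with $\|g_n\|=1$ and $g_n(z_n)=\|z_n\|$, and put $X_0^{(n+1)}:=X_0^{(n)}\cap\ker g_n$, which is again finite-codimensional and contained in $\ker f$. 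For $j>i$ this forces $g_i(z_j)=0$, so $\|z_i-z_j\|\ge g_i(z_i-z_j)=\|z_i\|\ge 2t$, as desired.

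Now I feed this into the definition of $\ov{\beta}_X(t)$. Let $\lambda:=(1+b-\eta)^{-1}$; since $0<b-\eta<1$ (recall $b\le 1$) we have $1/2<\lambda<1$. Put $x_0:=\lambda x$ and $x_n:=\lambda(x+z_n)$. Then $\|x_0\|=\lambda\le 1$, $\|x_n\|=\lambda\|x+z_n\|<\lambda(1+b-\eta)=1$, and $\|x_n-x_m\|=\lambda\|z_n-z_m\|\ge 2\lambda t\ge t$, so $\bigl(x_0;(x_n)_{n\ge 1}\bigr)$ is admissible in the definition of $\ov{\beta}_X(t)$. Because $z_n\in\ker f$ we get
\[
\|x_0+x_n\|=\lambda\,\|2x+z_n\|\ \ge\ \lambda\,f(2x+z_n)\ =\ 2\lambda\qquad\text{for every }n,
\]
whereas the definition of $\ov{\beta}$ forces $\inf_n\frac{\|x_0+x_n\|}{2}\le 1-b$, hence $\lambda\le 1-b$, i.e. $1\le(1-b)(1+b-\eta)\le 1-b^{2}<1$ --- a contradiction. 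So the required $X_0\subseteq\ker f$ exists; since $\eta\in(0,b)$ was arbitrary the displayed inequality holds for $x$, and an infimum over $\|x\|=1$ finishes the proof.

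The one step needing a genuine idea is placing $X_0$ inside $\ker f$: estimating $\|x_0+x_n\|=\lambda\|2x+z_n\|$ with the bare triangle inequality only yields $\|2x+z_n\|\ge 2-\|z_n\|$, which is useless since the $z_n$ are only known to satisfy $\|z_n\|\ge 2t$ (not to be small), and the resulting loss is fatal because $b=\ov{\beta}_X(t)$ can be as small as a positive power of $t$. Channeling the perturbations through $\ker f$ replaces this by the exact bound $\|2x+z_n\|\ge f(2x+z_n)=2$. The rest --- the standard iterated Hahn--Banach separation construction and the closing arithmetic --- is routine.
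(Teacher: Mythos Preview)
Your proof is correct and follows essentially the same route as the paper's: pick a norming functional $f$ for $x$, build a $2t$-separated sequence $(z_n)\subseteq\ker f$ by iterated Hahn--Banach, and use the key lower bound $\|cx+z_n\|\ge f(cx+z_n)=c$ to clash with the $(\beta)$-condition after a rescaling. The only differences are cosmetic---the paper argues directly (bounding $\ov{\delta}_X(2t,x)+\varepsilon$ from below) rather than by contradiction, and rescales by $1+\beta$ instead of your $\lambda=(1+b-\eta)^{-1}$.
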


\begin{proof}
For $t\in(0,1/2]$, denote $\beta=\ov{\beta}_X(t)$ and assume without loss of generality that $\beta>0$.  Let $x\in S_X$.  Then for every sequence $(u_n)_{n\geq 1} \subseteq B_X$ with $\sep\left((u_n)_{n\geq 1}\right)\geq t$, one has
$$\inf\left\{ \frac{\|x+u_n\|}{2}:n\geq 1\right\} \leq 1-\beta<\frac{1}{1+\beta}.$$
Multiplying everything by $1+\beta$ and taking the contrapositive, we get for any sequence $(x_n)_{n\geq 1}$ that if $\sep\left((x_n)_{n\geq 1}\right)\geq (1+\beta)\,t$, and if $\displaystyle \inf_{n\geq 1}\|(1+\beta)x+x_n\|\geq 2$, then $\|x_n\|>1+\beta$ on a subsequence, and hence $\liminf \|x_n\|\geq 1+\beta$.

\medskip

Denote $T=2t$.  For a finite codimensional subspace $X_0$, denote
$$\ov{\delta}_X(T,x,X_0)=\inf\{\|x+z\|-1: z\in X_0, \|z\|\geq T\};$$
and denote $\ov{\delta}_X(T,x)$ the supremum of $\ov{\delta}_X(T,x,X_0)$ over all finite codimensional subspaces $X_0$.

Fix $\eps>0$.  Let $x^*\in S_{X^*}$ be such that $x^*(x)=\|x\|=1$.  Denote by $X_1=\ker(x^*)$.  Let $z_1\in X_1$ be such that $\|z_1\|\geq T$ and
$$\|x+z_1\|-1<\ov{\delta}_X(T,x,X_1)+\eps\leq \ov{\delta}_X(T,x)+\eps.$$
Let $z_1^*\in S_{X^*}$ be such that $z_1^*(z_1)=\|z_1\|$.  Call $X_2=X_1\cap \ker(z_1^*)$, and let $z_2\in X_2$ be such that $\|z_2\|\geq T$ and 
$$\|x+z_2\|-1<\ov{\delta}_X(T,x,X_2)+\eps\leq \ov{\delta}_X(T,x)+\eps.$$
Continuing by induction, we find a sequence $(z_n)_{n\geq 1}\subseteq \ker(x^*)$ and functionals $(z_n^*)_{n \geq 1}\subseteq S_{X^*}$ such that for every $n\geq 1$ one has $z_n^*(z_n)=\|z_n\|\geq T$, and $z_{n+1}\in \bigcap_{i=1}^{n}\ker(z_i^*)$, and
$$\|x+z_n\|-1\leq \ov{\delta}_X(T,x)+\eps.$$

\medskip

Denote $x_n=x+z_n$.  Then for $m>n$,
$$\|x_n-x_m\|=\|z_n-z_m\|\geq z_n^*(z_n-z_m)=z_n^*(z_n)=\|z_n\|\geq T.$$
So $\sep\left((x_n)_{n\geq 1}\right)\geq T=2t\geq (1+\beta)\,t$.  (Note that $\beta\leq 1$ by definition.)  On the other hand, we have
$$\|(1+\beta)x+x_n\|=\|(2+\beta)x+z_n\|\geq x^*((2+\beta)x+z_n)=2+\beta> 2.$$
As a result, we must have $\liminf \|x_n\|=\liminf \|x+z_n\|\geq 1+\beta$.

\medskip

This implies that $\beta=\ov{\beta}_X(t)\leq \ov{\delta}_X(2t,x)+\eps$, and since $\eps>0$ and $x\in S_X$ are arbitrary, we get $\ov{\beta}_X(t)\leq \ov{\delta}_X(2t)$.

\end{proof}

\medskip

\begin{definition} We say that the $(\beta)$-modulus has power type $p$ if there is a constant $C>0$ so that $\ov{\beta}(t) \ge Ct^p$.
\end{definition}

\medskip

\begin{prop}
Let $1<p<\infty$. There exist uniformly homeomorphic Banach spaces $C_p$
and $K_p$ so that $C_p$ has $(\beta)$-modulus of power type $p$ while $K_p$ has no
equivalent norm with $(\beta)$-modulus of power type $p$.
\end{prop}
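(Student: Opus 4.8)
The plan is to realise $C_p$ and $K_p$ as $\ell_p$-sums of finite-dimensional spaces, with one extra infinite-dimensional summand attached to $K_p$, and to use three facts: (i) every $\ell_p$-sum of finite-dimensional Banach spaces has $(\beta)$-modulus of power type $p$; (ii) the $(\beta)$-modulus is monotone under passing to subspaces, so an equivalent norm of $(\beta)$-power type $p$ on a space restricts to such a norm on each of its subspaces; and (iii) for $r>p$ the space $\ell_r$ has no equivalent norm of $(\beta)$-power type $p$ --- indeed every equivalent norm on $\ell_r$ has $(\beta)$-modulus $O(t^r)$ --- whereas $\ell_r$ is finitely representable in a suitable $\ell_p$-sum of finite-dimensional spaces and can therefore be absorbed, up to uniform homeomorphism, by a Ribe-type construction.

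So fix $r\in(p,\infty)$ and set $C_p:=\bigl(\bigoplus_{n=1}^{\infty}\ell_r^{n}\bigr)_{\ell_p}$ and $K_p:=C_p\oplus\ell_r$ (with the block dimensions enlarged if needed for the Ribe step below); both spaces are separable and reflexive. To see that $C_p$ has $(\beta)$-modulus of power type $p$, let $x\in B_{C_p}$ and $(x_n)\subseteq B_{C_p}$ with $\sep((x_n))\ge t$. Using reflexivity and the natural finite-dimensional decomposition of $C_p$ into the blocks $\ell_r^{n}$, pass to a subsequence with $x_n\rightharpoonup y$ and with $x_n-y$ a small perturbation of a block sequence $(w_n)$ having $\|w_n\|\to c$. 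Disjointness of the blocks together with $\ell_p$-additivity gives $\|x_n-x_m\|\to 2^{1/p}c$, forcing $c\ge 2^{-1/p}t$; the constraint $\|x_n\|\le 1$ gives $\|y\|^p+c^p\le 1$; and $\|x-x_n\|^p\to\|x-y\|^p+c^p$. Inserting $\|x-y\|\le 1+\|y\|$ into these relations and optimizing over $\|y\|\in[0,1]$ produces $\inf_n\|x-x_n\|/2\le 1-C_p\,t^p$ for some $C_p>0$, i.e.\ $\ov\beta_{C_p}(t)\ge C_p\,t^p$. (The reverse inequality $\ov\beta_{C_p}(t)\lesssim t^p$ follows from Lemma~\ref{moduli} and $\ov\delta_{C_p}(t)\approx t^p$, so $p$ is the exact power type; only the lower bound is needed here, and the $\ell_r$-structure of the blocks plays no role in it.)

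Next, $K_p$ has no equivalent norm of $(\beta)$-power type $p$. Since $\ell_r$ is a subspace of $K_p$ and the $(\beta)$-modulus only increases on subspaces, it is enough to show that $\ell_r$ has no equivalent norm of $(\beta)$-power type $p$. This rests on the standard fact that, in \emph{any} equivalent norm on $\ell_r$, a normalized block sequence of the unit vector basis spans a subspace uniformly isomorphic to $\ell_r$, with constant depending only on the norm. Granting this, one exhibits a witnessing configuration: take a norm-one vector $e_0$, a normalized block sequence $(u_n)$ supported away from $e_0$, and, for small $t'>0$, vectors $x_n$ proportional to $-e_0+t'u_n$ lying in the unit ball. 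An $\ell_r$-type estimate then gives $\sep((x_n))\approx c_1\,t'$ while $\inf_n\|x-x_n\|\ge 2-c_2\,(t')^{r}$, so this renorming has $(\beta)$-modulus $O(t^{r})$. Since $t^{r}=o(t^{p})$ as $t\to 0^{+}$ when $r>p$, no lower bound $Ct^{p}$ is possible.

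Finally, $C_p$ and $K_p$ are uniformly homeomorphic: each $\ell_r^{n}$ embeds isometrically into $\ell_r^{m}$ for $m\ge n$, so $\ell_r$ is finitely representable in the sequence of blocks $(\ell_r^{n})_n$ of $C_p$, and a Ribe-type absorption argument (see \cite{BenyaminiLindenstrauss2000}) yields a uniform homeomorphism between $\bigl(\bigoplus_n\ell_r^{n}\bigr)_{\ell_p}$ and $\bigl(\bigoplus_n\ell_r^{n}\bigr)_{\ell_p}\oplus\ell_r$, i.e.\ between $C_p$ and $K_p$. I expect this to be the main obstacle: one must run the Ribe construction for an $\ell_p$-sum of finite-dimensional spaces while keeping the moduli $\omega$ and $\Omega$ of the homeomorphism under control; the two $(\beta)$-modulus estimates, although they need some care with perturbations of block sequences, are otherwise routine. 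I would also note that $K_p$ does carry an equivalent norm with property $(\beta)$, namely its natural norm, so this example records a purely quantitative --- not qualitative --- failure of transfer, consistent with Theorems~\ref{thm1} and~\ref{thm2}.
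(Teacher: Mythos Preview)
The gap is in the uniform homeomorphism, and it is not merely a technicality. Ribe's absorption construction (the one recorded in \cite{BenyaminiLindenstrauss2000}) shows that $(\bigoplus_n\ell_{p_n})_{2}$ and $(\bigoplus_n\ell_{p_n})_{2}\oplus\ell_p$ are uniformly homeomorphic when $p_n\to p$; the engine is that the Mazur map gives a uniform homeomorphism $\ell_{p_n}\to\ell_p$ whose moduli tend to those of an isometry as $p_n\to p$, so $\ell_p$ can be absorbed block by block. Your situation has no such engine: the blocks $\ell_r^n$ are finite-dimensional and are not uniformly homeomorphic to $\ell_r$ at all, so there is nothing to absorb. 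Finite representability of $\ell_r$ in the sequence $(\ell_r^n)_n$ is a purely local, linear statement and does not manufacture a nonlinear equivalence of the infinite sums. You correctly flag this step as ``the main obstacle'', but it is not clear that it can be overcome; I know of no theorem asserting that $(\bigoplus_n\ell_r^{\,n})_{\ell_p}$ and $(\bigoplus_n\ell_r^{\,n})_{\ell_p}\oplus\ell_r$ are uniformly homeomorphic, and enlarging the block dimensions does not change the picture.

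The paper avoids this difficulty by choosing a specific pair for which the uniform homeomorphism is an existing theorem of Kalton \cite{Kalton2010}: $C_p=(\sum_n G_n)_{\ell_p}$ is Johnson's space (with $(G_n)$ Banach--Mazur dense among finite-dimensional spaces) and $K_p=\bigl((\sum_n G_n)_{T_p}\bigr)^2$ uses the $p$-convexified Tsirelson space. The obstruction to a power-$p$ $(\beta)$-renorming on $K_p$ is then obtained not via a subspace argument but from Kalton's companion result that $K_p$ admits no equivalent norm with AUC modulus of power type $p$, combined with the inequality $\ov\beta_X(t)\le\ov\delta_X(2t)$ of Lemma~\ref{moduli}. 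Your alternative route to the obstruction---pass to an $\ell_r$ subspace and use that every renorming of $\ell_r$ has $\ov\beta(t)=O(t^r)$---is sound and pleasantly elementary (indeed it also follows from Lemma~\ref{moduli} since every renorming of $\ell_r$ has $\ov\delta(t)=O(t^r)$), but it is attached to a pair of spaces whose uniform equivalence you have not established.
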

\begin{proof}
Let $(G_n)_{n\geq 1}$ be a sequence of finite-dimensional Banach spaces dense
in all finite-dimensional Banach spaces for the Banach-Mazur
distance. Let $1<p<\infty$. Johnson \cite{Johnson1971} introduced the space $C_p$ given by $C_p = ( \sum_{n=1}^{\infty} G_n)_{\ell_p}$.
Kalton \cite{Kalton2010} considered the space $K_p = \left(\left( \sum_{n=1}^{\infty} G_n\right)_{T_p}\right)^2$, where $T_p$ is the $p$-convexification of Tsirelson space $T$ (see for example \cite{CasazzaShura1989}).  Kalton proved in \cite{Kalton2010} that $C_p$ and $K_p$ are uniformly homeomorphic. 

The computation in \cite[Proposition 5.1]{DilworthKutzarovaLancienRandrianarivony2012} gives that $C_p$ has 
$(\beta)$-modulus of power type $p$. On the other hand, Kalton remarks in \cite{Kalton2010} that $K_p$ does not admit any equivalent norm with AUC modulus of power type $p$.  Assume that $K_p$ admits an equivalent norm with $(\beta)$-modulus of power type
$p$, that is $\ov{\beta}_{K_p}(t) \ge Ct^p$. By Lemma \ref{moduli}, we have $\ov{\beta}_{K_p}(t) \le \ov{\delta}_{K_p}(2t)$. Thus for this norm we would obtain $\ov{\delta}_{K_p}(t) \ge C't^p$, which is in contradiction with Kalton's result.
\end{proof}


\begin{rmk} In contrast with Theorems \ref{thm1} and \ref{thm2}, as well as with \cite[Proposition 4.1]{MendelNaor2010}, the above proposition shows that we cannot compare $\ov{\beta}_{C_p}(t)$ and $\ov{\beta}_{K_p}(t)$ even under renorming, despite the fact that one is a uniform quotient of the other.

Furthermore, it was proved in \cite{DilworthKutzarovaLancienRandrianarivony2012} that if a Banach space $Y$ is a uniform quotient of a Banach space $Z$, then for some constants  $C_1$ and $C_2$ we have,
\[
C_1\ov{\beta}_Z(C_2t) \le \ov{\rho}_Y(t).
\]
Taking $Z=C_p$ and $Y=K_p$, it follows from the above proposition that one can not replace the AUS modulus $\ov{\rho}_Y$ with the AUC modulus $\ov{\delta}_Y$ even under an equivalent norm on $Y$.
\end{rmk}

\medskip

The rest of the article is devoted to an investigation of a renorming question.  Recall that the AUS modulus is of power type $p$ if there exists a constant $C$ such that $\ov{\rho}(t)\leq Ct^p$.  In \cite{GodefroyKaltonLancien2001} it is shown that if a separable Banach space $X$ with AUS modulus of power type $p>1$ is uniformly homeomorphic to another Banach space $Y$, then for any $q<p$, $Y$ can be renormed to have AUS modulus of power type $q$.  It is natural to ask whether the same holds for the $(\beta)$-modulus, namely:

\medskip

\begin{quote}
Assume that two Banach spaces are uniformly homeomorphic and that one of them has $(\beta)$-modulus of power type $p$. Given an arbitrary $q>p$, does the other space always have an equivalent norm with $(\beta)$-modulus of power type $q$?
\end{quote}

\medskip

In the case of the counterexample provided by the pair of spaces $C_p$ and $K_p$, where $1<p<\infty$, we can give a positive answer to the above question.

\medskip

\begin{prop}
Assume that $X$ is a separable reflexive Banach space which is uniformly homeomorphic to a space with $\ov{\rho}(t) \le C_1t^p$, $1<p<\infty$, and that the dual space $X^*$ is uniformly homeomorphic to a space with  $\ov{\rho}(t) \le C_2t^{p'}$ where $1/p + 1/p' =1$. Then for any $\gamma >0$, $X$ admits an equivalent norm with $\ov{\beta}(t) \ge Ct^{p+\gamma}$.
\end{prop}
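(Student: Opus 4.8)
The plan is to build the equivalent norm on $X$ in three stages: first renorm $X$ so that it is asymptotically uniformly smooth with power type close to $p$; then renorm $X$ so that it is asymptotically uniformly convex with power type close to $p$; and finally fuse the two renormings into one equivalent norm with property $(\beta)$ of power type just above $p$.

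First I would observe that $X^{*}$ is separable: since $X$ is reflexive and separable, $X^{**}=X$ is separable, hence, a Banach space with separable dual being itself separable applied to $X^{*}$, the space $X^{*}$ is separable. Also a uniformly continuous surjection carries a separable space onto a separable space, so the space with $\ov{\rho}(t)\le C_{1}t^{p}$ to which $X$ is uniformly homeomorphic, and the space with $\ov{\rho}(t)\le C_{2}t^{p'}$ to which $X^{*}$ is uniformly homeomorphic, are separable. Thus the theorem of Godefroy, Kalton and Lancien \cite{GodefroyKaltonLancien2001} quoted above applies to each of these two uniform homeomorphisms and gives: for every $q<p$, $X$ has an equivalent norm whose AUS modulus is $\le Ct^{q}$; and for every $q'<p'$, $X^{*}$ has an equivalent norm whose AUS modulus is $\le Ct^{q'}$.

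Next I would transport the statement about $X^{*}$ back to $X$ via the standard duality, valid for reflexive spaces, between the AUS modulus of $X^{*}$ and the AUC modulus of $X$: if an equivalent norm on $X^{*}$ has AUS modulus of power type $q'$, then its predual norm on $X=X^{**}$ has AUC modulus of power type $(q')^{*}$, where $1/q'+1/(q')^{*}=1$. As $q'$ ranges over $(1,p')$ the exponent $(q')^{*}$ ranges over $(p,\infty)$, so for every $s>p$ the space $X$ has an equivalent norm whose AUC modulus is $\ge c\,t^{s}$. Fixing now some $q<p$ and some $s\in(p,p+\gamma)$, I have produced on $X$ one equivalent norm $\|\cdot\|_{1}$ with AUS modulus $\le Ct^{q}$ and another equivalent norm $\|\cdot\|_{2}$ with AUC modulus $\ge c\,t^{s}$.

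The remaining step is the main obstacle: a quantitative renorming principle asserting that a separable reflexive Banach space carrying an equivalent AUS norm of power type $r$ and an equivalent AUC norm of power type $s$, with $r\le s$, admits an equivalent norm with property $(\beta)$ of power type $s$. Applying it to $\|\cdot\|_{1}$ and $\|\cdot\|_{2}$ (here $q<p<s$) produces a norm on $X$ with $\ov{\beta}_{X}(t)\ge Ct^{s}$, and since $s<p+\gamma$ forces $t^{s}\ge t^{p+\gamma}$ for $t\in(0,1]$, this is exactly an equivalent norm with $\ov{\beta}_{X}(t)\ge Ct^{p+\gamma}$. The difficulty is that the fusion is not achieved by any obvious averaging: the $\ell_{1}$-sum $\|\cdot\|_{1}+\|\cdot\|_{2}$ keeps the AUC estimate of $\|\cdot\|_{2}$ but loses asymptotic smoothness, because $\|\cdot\|_{2}$ contributes a term linear in the size of the asymptotic perturbation, while dually $(\|\cdot\|_{1}^{*}+\|\cdot\|_{2}^{*})^{*}$ keeps only smoothness. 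I would instead, in the spirit of \cite{BaudierKaltonLancien2010} and of the $(\beta)$-modulus computation in \cite[Proposition 5.1]{DilworthKutzarovaLancienRandrianarivony2012}, use separability and reflexivity to embed $X$ into a reflexive space with a finite-dimensional decomposition admitting simultaneous upper-$\ell_{r}$ and lower-$\ell_{s}$ asymptotic estimates, renorm that space explicitly so as to have property $(\beta)$ of power type $s$ (estimating $\ov{\beta}$ directly from the decomposition, as is done for $C_{p}=(\sum_{n=1}^{\infty}G_{n})_{\ell_{p}}$), and then restrict that norm to $X$, using that the $(\beta)$-modulus only improves on passing to a subspace. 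As a sanity check, the hypotheses hold for $X=K_{p}$, since $K_{p}$ is uniformly homeomorphic to $C_{p}=(\sum_{n=1}^{\infty}G_{n})_{\ell_{p}}$, which has AUS modulus of power type $p$, and $K_{p}^{*}$ is uniformly homeomorphic to $C_{p'}=(\sum_{n=1}^{\infty}G_{n}^{*})_{\ell_{p'}}$, which has AUS modulus of power type $p'$; so $K_{p}$ has, for every $\gamma>0$, an equivalent norm with $(\beta)$-modulus of power type $p+\gamma$, though it has none of power type $p$.
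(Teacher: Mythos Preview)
Your first two stages coincide with the paper's proof: apply \cite{GodefroyKaltonLancien2001} to $X$ to get an equivalent norm with AUS modulus of power type $p-\varepsilon$, apply it to $X^{*}$ and dualize via \cite{Prus1989} to get an equivalent norm on $X$ with AUC modulus of power type $p+\varepsilon$. Your separability bookkeeping is fine.

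The divergence is in the fusion step. The paper does not attempt to build the combined norm by hand; it cites two results. First, \cite[Proposition~2.10]{JohnsonLindenstraussPreissSchechtman2002} (see also \cite{OdellSchlumprecht2006}) gives a \emph{single} equivalent norm on $X$ that simultaneously has $\ov{\rho}$ of power type $p-\varepsilon$ and $\ov{\delta}$ of power type $p+\varepsilon$; this is exactly the FDD/embedding machinery you allude to, already packaged. Second, \cite{DilworthKutzarovaRandrianarivonyRevalskiZhivkov2012} computes that a norm with $\ov{\rho}$ of power type $a$ and $\ov{\delta}$ of power type $b$ has $\ov{\beta}$ of power type $\dfrac{a(b-1)}{a-1}$, which for $a=p-\varepsilon$, $b=p+\varepsilon$ equals $p+o(\varepsilon)$.

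Your sketch asserts the stronger conclusion that one obtains $(\beta)$-modulus of power type exactly $s$ (the AUC exponent). That is not what your outline delivers: the computation ``as is done for $C_p$'' in \cite[Proposition~5.1]{DilworthKutzarovaLancienRandrianarivony2012} relies on having upper and lower $\ell_p$ estimates with the \emph{same} $p$; with upper-$\ell_r$ and lower-$\ell_s$ estimates for $r<s$ the analogous computation yields the DKRRZ exponent $\dfrac{r(s-1)}{r-1}$, which is strictly larger than $s$. This does not break the proposition, since by choosing both $r$ and $s$ close to $p$ that exponent is still below $p+\gamma$; but the clean route is to quote the two black-box results above rather than claim the unproved sharper exponent.
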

\begin{proof}
By the result in \cite{GodefroyKaltonLancien2001}, the space $X$ has an equivalent norm $\|.\|_1$ with $\ov{\rho}_{\|.\|_1}(t)$ of power type $p- \varepsilon$.  
Similarly, for any $\varepsilon_1 >0$, $X^*$ has an equivalent norm $\|.\|_2^*$ with $\ov{\rho}_{\|.\|_2^*}(t)$ of power type $p' - \varepsilon_1$. By duality \cite{Prus1989} (see also \cite{GodefroyKaltonLancien2001}), $X$ has $\ov{\delta}_{\|.\|_2}(t)$ of power type $p +
\varepsilon$ provided we choose $\varepsilon_1$ accordingly. Then by \cite[Proposition 2.10]{JohnsonLindenstraussPreissSchechtman2002} (see also \cite{OdellSchlumprecht2006}), $X$ admits an equivalent norm $\|.\|$ with both $\ov{\rho}(t)$ of power type $p- \varepsilon$ and
$\ov{\delta}(t)$ of power type $p+ \varepsilon$.

It was shown in \cite{Kutzarova1990} that if a norm is both NUC and NUS then it also has property $(\beta)$. Following the same avenue, it was proved in \cite{DilworthKutzarovaRandrianarivonyRevalskiZhivkov2012} that a norm with the above power types for $\ov{\rho}(t)$ and $\ov{\delta}(t)$ has a $(\beta)$-modulus $\ov{\beta}(t)$ of power type
$$
\frac{(p-\varepsilon)(p+\varepsilon)-(p-\varepsilon)}{(p-\varepsilon)-1}=
p+o(\eps).
$$
\end{proof}

\begin{cor}
For every  $\varepsilon >0$ the space $K_p =\left(\left( \sum_{n=1}^\infty G_n\right)_{T_p}\right)^2$, $1<p<\infty$, has an equivalent norm with $(\beta)$-modulus of power type $p+ \varepsilon$.
\end{cor}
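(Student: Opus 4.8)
The plan is to deduce the corollary directly from the preceding Proposition applied with $X=K_p$; the whole task reduces to checking the three hypotheses of that proposition for $K_p$ and then reading off the conclusion with $\gamma=\varepsilon$.

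First I would record that $K_p$ is separable and reflexive. Separability is immediate because each $G_n$ is finite-dimensional, $T_p$ is separable, hence so are $(\sum_n G_n)_{T_p}$ and its square $(\cdot)^2$. Reflexivity holds because Tsirelson's space $T$ is reflexive, its $p$-convexification $T_p$ is reflexive for $1<p<\infty$, a $T_p$-sum of finite-dimensional (hence reflexive) spaces is reflexive since $T_p$ has a $1$-unconditional basis and contains no copy of $c_0$ or $\ell_1$, and a finite direct sum of a reflexive space is reflexive. Next I would handle the predual side. By Kalton's theorem \cite{Kalton2010}, already invoked above, $K_p$ is uniformly homeomorphic to $C_p=(\sum_n G_n)_{\ell_p}$; and the same $\ell_p$-sum computation referenced from \cite[Proposition 5.1]{DilworthKutzarovaLancienRandrianarivony2012} (there used for the $(\beta)$-modulus, here applied to the AUS modulus) shows that for a tail perturbation $\|x+z\|\le(\|x\|^p+\|z\|^p)^{1/p}$, so that $\ov{\rho}_{C_p}(t)\le C_1 t^p$. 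Thus $K_p$ is uniformly homeomorphic to a space with AUS modulus of power type $p$, which is the first analytic hypothesis.

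For the dual side I would argue symmetrically: identify $K_p^{\ast}$ with $\bigl((\sum_n G_n^{\ast})_{(T_p)^{\ast}}\bigr)^2$, note that $(T_p)^{\ast}$ is (a renorming of) the $p'$-convexification of the dual of Tsirelson's space, and that the family $(G_n^{\ast})$ is again dense among finite-dimensional spaces for the Banach--Mazur distance. Then Kalton's construction, carried out in this dual setting, gives that $K_p^{\ast}$ is uniformly homeomorphic to $C_{p'}:=(\sum_n G_n^{\ast})_{\ell_{p'}}$, where $1/p+1/p'=1$, and the $\ell_{p'}$-sum computation yields $\ov{\rho}_{C_{p'}}(t)\le C_2 t^{p'}$. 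With all three hypotheses of the Proposition verified, it provides, for any $\gamma>0$, an equivalent norm on $K_p$ with $\ov{\beta}(t)\ge Ct^{p+\gamma}$; taking $\gamma=\varepsilon$ completes the proof.

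The step I expect to be the main obstacle is the dual condition. A uniform homeomorphism between reflexive spaces need not descend to a uniform homeomorphism of the duals, so one must check that Kalton's equivalence between $C_p$ and $K_p$ does admit a counterpart between $C_{p'}$ and $K_p^{\ast}$, and one must correctly match $(T_p)^{\ast}$ with the appropriate convexification of the dual of Tsirelson's space, being careful about which ``Tsirelson space'' (the reflexive Figiel--Johnson space versus the original) is meant and about the interaction of duality with $p$-convexification. Once that identification is secured, the remaining verifications are just the bookkeeping of the standard $\ell_p$- and $\ell_{p'}$-sum moduli.
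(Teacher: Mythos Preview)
Your proposal is correct and follows essentially the same route as the paper: verify the hypotheses of the preceding Proposition for $X=K_p$ by invoking Kalton's uniform homeomorphism $K_p\sim C_p$ on the primal side and, after identifying $K_p^*=\bigl((\sum_n G_n^*)_{T_p^*}\bigr)^2$, invoking Kalton's theorem again on the dual side to get $K_p^*\sim C_{p'}$. The paper is terser—it simply writes $T_p^*=T_{p'}$ and omits the separability/reflexivity checks—but the caution you flag about matching $(T_p)^*$ with the correct convexification is exactly the one nontrivial point, and the paper resolves it by that identification.
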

\begin{proof}
By \cite{Kalton2010}, $K_p$ is uniformly homeomorphic to $C_p$ and $\ov{\rho}_{_{C_p}}(t) \le C_1t^p$. On the other hand, we have $
{K_p}^* = \left(( \sum_{n=1}^{\infty} G_n^*)_{T_p^*}\right)^2 = \left(( \sum_{n=1}^\infty G_n^*)_{T_{p'}}\right)^2$.  Since $(G_n^*)_{n\geq 1}$ is also dense in Banach-Mazur metric, we obtain by the same theorem in \cite{Kalton2010} that $\left(\left( \sum_{n=1}^{\infty} G_n^*\right)_{T_{p'}}\right)^2$ is uniformly homeomorphic to $C_{p'}$; and $\ov{\rho}_{_{C_{p'}}}(t) \le C_2t^{p'}$.
\end{proof}

\section{Acknowledgments}
This work was initiated while the authors attended the NSF funded Workshop in Linear Analysis and Probability at Texas A\&M University during Summer 2012, and was finalized while the first two authors attended this same Workshop during Summer 2013.

The authors would like to thank the anonymous referee for his/her invaluable comments that helped smooth out the presentation of this paper.


\begin{bibsection}
\begin{biblist}

\bib{AyerbeDominguez_BenavidesCutillas1994}{article}{
  author={Ayerbe, J.~M.},
  author={Dom\'inguez Benavides, T.},
  author={Cutillas, S.~Francisco},
  title={Some noncompact convexity moduli for the property $(\beta )$ of Rolewicz},
  journal={Comm. Appl. Nonlinear Anal.},
  volume={1},
  date={1994},
  number={1},
  pages={87--98},
}

\bib{BatesJohnsonLindenstraussPreissSchechtman1999}{article}{
  author={Bates, S.},
  author={Johnson, W.~B.},
  author={Lindenstrauss, J.},
  author={Preiss, D.},
  author={Schechtman, G.},
  title={Affine approximation of Lipschitz functions and nonlinear quotients},
  journal={Geom. Funct. Anal.},
  volume={9},
  date={1999},
  number={6},
  pages={1092--1127},
}

\bib{Baudier2007}{article}{
  author={Baudier, Florent},
  title={Metrical characterization of super-reflexivity and linear type of Banach spaces},
  journal={Arch. Math.},
  volume={89},
  date={2007},
  number={5},
  pages={419--429},
}

\bib{BaudierKaltonLancien2010}{article}{
  author={Baudier, F.},
  author={Kalton, N.~J.},
  author={Lancien, G.},
  title={A new metric invariant for Banach spaces},
  journal={Studia Math.},
  volume={199},
  date={2010},
  number={1},
  pages={73--94},
}

\bib{BenyaminiLindenstrauss2000}{book}{
  author={Benyamini, Yoav},
  author={Lindenstrauss, Joram},
  title={Geometric nonlinear functional analysis. Vol. 1},
  series={American Mathematical Society Colloquium Publications},
  volume={48},
  publisher={American Mathematical Society},
  place={Providence, RI},
  date={2000},
}

\bib{Bourgain1986}{article}{
  author={Bourgain, J.},
  title={The metrical interpretation of superreflexivity in Banach spaces},
  journal={Israel J. Math.},
  volume={56},
  date={1986},
  number={2},
  pages={222--230},
}

\bib{CasazzaShura1989}{book}{
  author={Casazza, Peter G.},
  author={Shura, Thaddeus J.},
  title={Tsirel{\cprime }son's space},
  series={Lecture Notes in Mathematics},
  volume={1363},
  note={With an appendix by J. Baker, O. Slotterbeck and R. Aron},
  publisher={Springer-Verlag},
  place={Berlin},
  date={1989},
}

\bib{CheegerKleiner2011}{article}{
  author={Cheeger, Jeff},
  author={Kleiner, Bruce},
  title={Realization of metric spaces as inverse limits, and bilipschitz embedding in $L_1$},
  journal={Geom. Funct. Anal.},
  volume={23},
  date={2013},
  number={1},
  pages={96--133},
}

\bib{DilworthKutzarovaLancienRandrianarivony2012}{article}{
  author={Dilworth, S.~J.},
  author={Kutzarova, Denka},
  author={Lancien, G.},
  author={Randrianarivony, N.~L.},
  title={Asymptotic geometry of Banach spaces and uniform quotient maps},
  journal={Proc. Amer. Math. Soc.},
  volume={142},
  date={2014},
  number={8},
  pages={2747--2762},
}

\bib{DilworthKutzarovaRandrianarivonyRevalskiZhivkov2012}{article}{
  author={Dilworth, S.~J.},
  author={Kutzarova, Denka},
  author={Randrianarivony, N.~Lovasoa},
  author={Revalski, J.~P.},
  author={Zhivkov, N.~V.},
  title={Compactly uniformly convex spaces and property $(\beta )$ of Rolewicz},
  journal={J. Math. Anal. Appl.},
  volume={402},
  date={2013},
  number={1},
  pages={297--307},
}

\bib{GodefroyKaltonLancien2001}{article}{
  author={Godefroy, G.},
  author={Kalton, N.~J.},
  author={Lancien, G.},
  title={Szlenk indices and uniform homeomorphisms},
  journal={Trans. Amer. Math. Soc.},
  volume={353},
  date={2001},
  number={10},
  pages={3895--3918 (electronic)},
}

\bib{Huff1980}{article}{
  author={Huff, R.},
  title={Banach spaces which are nearly uniformly convex},
  journal={Rocky Mountain J. Math.},
  volume={10},
  date={1980},
  number={4},
  pages={743--749},
}

\bib{James1964a}{article}{
  author={James, Robert C.},
  title={Weak compactness and reflexivity},
  journal={Israel J. Math.},
  volume={2},
  date={1964},
  pages={101\ndash 119},
}

\bib{Johnson1971}{article}{
  author={Johnson, William B.},
  title={Factoring compact operators},
  journal={Israel J. Math.},
  volume={9},
  date={1971},
  pages={337--345},
}

\bib{JohnsonLindenstraussPreissSchechtman2002}{article}{
  author={Johnson, William B.},
  author={Lindenstrauss, Joram},
  author={Preiss, David},
  author={Schechtman, Gideon},
  title={Almost Fr\'echet differentiability of Lipschitz mappings between infinite-dimensional Banach spaces},
  journal={Proc. London Math. Soc. (3)},
  volume={84},
  date={2002},
  number={3},
  pages={711--746},
}

\bib{JohnsonSchechtman2009}{article}{
  author={Johnson, William B.},
  author={Schechtman, Gideon},
  title={Diamond graphs and super-reflexivity},
  journal={J. Topol. Anal.},
  volume={1},
  date={2009},
  number={2},
  pages={177--189},
}

\bib{Kalton2010}{article}{
  author={Kalton, N. J.},
  title={Examples of uniformly homeomorphic Banach spaces},
  journal={Israel J. Math.},
  volume={194},
  date={2013},
  number={1},
  pages={151--182},
}

\bib{Kutzarova1989}{article}{
  author={Kutzarova, D. N.},
  title={On condition $(\beta )$ and $\Delta $-uniform convexity},
  journal={C. R. Acad. Bulgare Sci.},
  volume={42},
  date={1989},
  number={1},
  pages={15--18},
}

\bib{Kutzarova1990}{article}{
  author={Kutzarova, Denka},
  title={An isomorphic characterization of property $(\beta )$ of Rolewicz},
  journal={Note Mat.},
  volume={10},
  date={1990},
  number={2},
  pages={347--354},
}

\bib{Kutzarova1991}{article}{
  author={Kutzarova, Denka},
  title={$k$-$\beta $ and $k$-nearly uniformly convex Banach spaces},
  journal={J. Math. Anal. Appl.},
  volume={162},
  date={1991},
  number={2},
  pages={322--338},
}

\bib{Laakso2000}{article}{
  author={Laakso, T.~J.},
  title={Ahlfors $Q$-regular spaces with arbitrary $Q>1$ admitting weak PoincarŽ inequality},
  journal={Geom. Funct. Anal.},
  volume={10},
  date={2000},
  number={1},
  pages={111--123},
}

\bib{Laakso2002}{article}{
  author={Laakso, Tomi J.},
  title={Plane with $A_{\infty }$-weighted metric not bi-Lipschitz embeddable to $\mathbb {R}^n$},
  journal={Bull. London Math. Soc.},
  volume={34},
  date={2002},
  number={6},
  pages={667--676},
}

\bib{LimaRandrianarivony2010}{article}{
  author={Lima, Vegard},
  author={Randrianarivony, N. Lovasoa},
  title={Property $(\beta )$ and uniform quotient maps},
  journal={Israel J. Math.},
  volume={192},
  date={2012},
  number={1},
  pages={311--323},
}

\bib{MendelNaor2010}{article}{
  author={Mendel, Manor},
  author={Naor, Assaf},
  title={Markov convexity and local rigidity of distorted metrics},
  journal={J. Eur. Math. Soc. (JEMS)},
  volume={15},
  date={2013},
  number={1},
  pages={287--337},
}

\bib{Milman1971}{article}{
  author={Milman, V.~D.},
  title={Geometric theory of Banach spaces. II. Geometry of the unit ball},
  language={Russian},
  journal={Uspehi Mat. Nauk},
  volume={26},
  date={1971},
  pages={73\ndash 149},
  note={English translation: Russian Math. Surveys {\bf 26} (1971), 79--163},
}

\bib{OdellSchlumprecht2006}{article}{
  author={Odell, Edward W.},
  author={Schlumprecht, Thomas},
  title={Embeddings into Banach spaces with finite dimensional decompositions},
  journal={RACSAM. Rev. R. Acad. Cienc. Ser. A Mat.},
  volume={100},
  date={2006},
  number={1-2},
  pages={295--323},
}

\bib{Ostrovskii2013}{article}{
  author={Ostrovskii, Mikhail},
  title={On metric characterizations of the Radon-Nikod\'ym and related properties of Banach spaces},
  pages={preprint},
}

\bib{Prus1989}{article}{
  author={Prus, Stanis\l aw},
  title={Nearly uniformly smooth Banach spaces},
  journal={Boll. Un. Mat. Ital. B},
  volume={7},
  date={1989},
  number={3},
  pages={507--521},
}

\bib{Rolewicz1986}{article}{
  author={Rolewicz, S.},
  title={On drop property},
  journal={Studia Math.},
  volume={85},
  date={1986},
  number={1},
  pages={27--35 (1987)},
}

\bib{Rolewicz1987}{article}{
  author={Rolewicz, S.},
  title={On $\Delta$-uniform convexity and drop property},
  journal={Studia Math.},
  volume={87},
  date={1987},
  number={2},
  pages={181--191},
}

\bib{Tyson2005}{article}{
  author={Tyson, Jeremy T.},
  title={Bi-Lipschitz embeddings of hyperspaces of compact sets},
  journal={Fund. Math.},
  volume={187},
  date={2005},
  number={3},
  pages={229--254},
}

\end{biblist}
\end{bibsection}

\end{document}